\newtheorem{thmO}{Theorem}
\newtheorem{propO}[thmO]{Proposition}
\newtheorem{obsO}[thmO]{Observation}
\newtheorem{conjO}[thmO]{Conjecture}
\newtheorem{questO}[thmO]{Question}
\newcommand{\+}{\hspace{0.07 em}}
\newcommand{\AAA}{\mathcal{A}}
\newcommand{\bb}{{\boldsymbol{\beta}}}
\newcommand{\BBB}{\mathcal{B}}
\newcommand{\bbC}{\mathbb{C}}
\newcommand{\bbE}{\mathbb{E}}
\newcommand{\bbN}{\mathbb{N}}
\newcommand{\bbQ}{\mathbb{Q}}
\newcommand{\bbR}{\mathbb{R}}
\newcommand{\bbZ}{\mathbb{Z}}
\newcommand{\bg}{{\boldsymbol{\gamma}}}
\newcommand{\ccc}{\mathfrak{c}}
\newcommand{\CCC}{\mathcal{C}}
\newcommand{\ceil}[1]{\left\lceil #1 \right\rceil}
\newcommand{\chplane}[1]{ \{ s : \re s \geqs #1 \} }
\newcommand{\DDD}{\mathcal{D}}
\newcommand{\defeq}{\mathchoice{\;:=\;}{:=}{:=}{:=}}
\newcommand{\dgf}{{\small DGF}}
\newcommand{\EEE}{\mathcal{E}}
\newcommand{\eq}{\mathchoice{\;=\;}{=}{=}{=}}
\newcommand{\expecn}[2]{\bbE_{#1}\big[#2\big]}
\newcommand{\fff}{\mathfrak{f}}
\newcommand{\FFF}{\mathcal{F}}
\newcommand{\floor}[1]{\left\lfloor #1 \right\rfloor}
\newcommand{\geqs}{\geqslant}
\newcommand{\gf}{{\small GF}}
\newcommand{\GGG}{\mathcal{G}}
\newcommand{\igf}{{\small IGF}}
\DeclareMathOperator{\im}{\mathfrak{Im}}
\newcommand{\leqs}{\leqslant}
\newcommand{\liminfty}[1][n]{\lim\limits_{#1\rightarrow\infty}}
\newcommand{\MMM}{\mathcal{M}}
\newcommand{\oeis}[1]{\href{https://oeis.org/#1}{#1}}
\newcommand{\ohplane}[1]{ \{ s : \re s > #1 \} }
\newcommand{\pity}{primitive irrationality}
\newcommand{\Pity}{Primitive irrationality}
\newcommand{\ppp}{\mathfrak{p}}
\newcommand{\PPP}{\mathcal{P}}
\newcommand{\QQQ}{\mathcal{Q}}
\newcommand{\prim}{primitive} 
\newcommand{\primi}{primitive irrational} 
\DeclareMathOperator{\re}{\mathfrak{Re}}
\newcommand{\sapprox}{\mathchoice{\;\approx\;}{\approx}{\approx}{\approx}}
\newcommand{\seq}[1]{\text{\textsc{Seq}}[#1]}
\newcommand{\sleqs}{\mathchoice{\;\leqs\;}{\leqs}{\leqs}{\leqs}}
\newcommand{\sminus}{\mathchoice{\:-\:}{-}{-}{-}}
\newcommand{\splus}{\mathchoice{\:+\:}{+}{+}{+}}
\newcommand{\ssim}{\mathchoice{\;\sim\;}{\sim}{\sim}{\sim}}
\newcommand{\sss}{\mathfrak{s}}
\newcommand{\SSS}{\mathcal{S}}
\newcommand{\STAR}{{\scalebox{1.1}{$*$}}}
\newcommand{\together}[1]{\Needspace*{#1\baselineskip}}
\newcommand{\ttt}{\mathfrak{t}}
\newcommand{\TTT}{\mathcal{T}}
\newcommand{\UUU}{\mathcal{U}}
\newcommand{\veps}{\varepsilon}
\newcommand{\vphi}{\varphi}
\newcommand{\VVV}{\mathcal{V}}
\newcommand{\WWW}{\mathcal{W}}
\newcommand{\YYY}{\mathcal{Y}}
\newcommand{\plotptradius}{0.275}
\newcommand{\setplotptradius}[1]{\renewcommand{\plotptradius}{#1}}
\newcommand{\plotpt}[3][] 
{ \fill[#1,radius=\plotptradius] (#2,#3) circle; }
\newcommand{\plotpermnobox}[3][]  
{
  \foreach \y [count=\x] in {#3}
  {
    \ifnum0=\y {} \else {
      \plotpt[#1]{\x}{\y}
    } \fi
  }
}
\newcommand{\myTitle}{\texorpdfstring{Introducing irrational enumeration: \\analytic combinatorics for objects of irrational size}{Introducing irrational enumeration: analytic combinatorics for objects of irrational size}}
\title{\textbf{\myTitle}}
\author{David Bevan${}^\dagger$ and Julien Cond\'e${}^{\,\ddag}$}
\date{}
\begin{document}
\maketitle

{\begin{NoHyper}
\let\thefootnote\relax\footnotetext
{${}^\dagger$Department of Mathematics and Statistics, The University of Strathclyde, Glasgow, Scotland.}
\let\thefootnote\relax\footnotetext
{${}^\ddag$Institut Denis Poisson, L'Universit\'e d'Orl\'eans, France.}
\end{NoHyper}}

{\begin{NoHyper}
\let\thefootnote\relax\footnotetext
{2020 Mathematics Subject Classification:
05A15, 
05A16, 
13F25, 
30B50. 
}
\end{NoHyper}}

\begin{abstract}
\noindent
We extend the scope of analytic combinatorics to classes containing objects that have irrational sizes.
The generating function for such a class is a power series that admits
irrational exponents (which we call a Ribenboim series).
A transformation then yields a generalised Dirichlet series from which the asymptotics of the coefficients can be extracted by singularity analysis
using an appropriate Tauberian theorem.
In practice, the asymptotics can often be determined directly from the original generating function.
We illustrate the technique with a variety of applications, including
tilings with tiles of irrational area,
ordered integer factorizations,
lattice walks enumerated by Euclidean length,
and plane trees with vertices of irrational size.
We also explore phase transitions in the asymptotics of families of irrational combinatorial classes.
\end{abstract}

\section{Introduction}

At the root of the discipline of analytic combinatorics
is
the fact that if we consider a generating function
analytically as a complex function,
then the asymptotic behaviour of its coefficients can be determined from the singularities on its circle of convergence.
Specifically (see~\cite[Theorem~VI.1]{FS2009}), given a combinatorial class~$\CCC$ 
with ordinary generating function
\[
f_\CCC(z) \eq \sum_{n\geqs0} |\CCC_n| z^n \eq \sum_{\ccc\in\CCC} z^{|\ccc|} ,
\]
where $\CCC_n$ is the set of objects in $\CCC$ of size $n$,
suppose that $f_\CCC(z)$
is (the Taylor series of) a complex function
with unique dominant singularity~$\rho$, such that
\[
f_\CCC(z) \eq g(z) + \frac{h(z)}{(1-z/\rho)^\alpha} ,
\]
where $\alpha\notin-\bbN$, and both $g$ and $h$ are analytic on the closed disk $\overline{D}(0,\rho)$.
Then the 
asymptotic number of objects in $\CCC$ of size $n$ is given by
\begin{equation}\label{eqClassical}
|\CCC_n| \eq
\big[z^n\big]f_\CCC(z) \ssim \frac{h(\rho)}{\Gamma(\alpha)} \+ \rho^{-n} \+ n^{\alpha-1} ,
\end{equation}
where $\big[z^n\big]f(z)$ is the coefficient of $z^n$ in (the Taylor series of) $f(z)$. 

\vspace{-9pt}\subsubsection*{Rational and irrational classes}\vspace{-9pt}

\begin{figure}[t]
\begin{center}
\begin{tikzpicture}[scale=0.9]
\draw[thick,fill=blue!30]   (0,0)      rectangle (1,1);
\draw[thick,fill=blue!30]   (1,0)      rectangle (2,1);
\draw[thick,fill=yellow!50] (2,0)      rectangle (3.414,1);
\draw[thick,fill=blue!30]   (3.414,0)  rectangle (4.414,1);
\draw[thick,fill=yellow!50] (4.414,0)  rectangle (5.828,1);
\draw[thick,fill=yellow!50] (5.828,0)  rectangle (7.243,1);
\draw[thick,fill=blue!30]   (7.243,0)  rectangle (8.243,1);
\draw[thick,fill=blue!30]   (8.243,0)  rectangle (9.243,1);
\draw[thick,fill=blue!30]   (9.243,0)  rectangle (10.243,1);
\draw[thick,fill=blue!30]   (10.243,0) rectangle (11.243,1);
\draw[thick,fill=yellow!50] (11.243,0) rectangle (12.657,1);
\draw (0,-.1)--(12.657,-0.1);
\foreach \x in {0,...,12} {
  \draw (\x,-.1)--(\x,-.225);
  \draw (\x,-.375) node{\scriptsize\x};
}
\end{tikzpicture}
\end{center}
\vspace{-12pt}
\caption{A tiling of a strip using seven tiles of length $1$ and four of length $\sqrt2$}\label{figTilingT}
\end{figure}
In this paper,
we establish analogous results when
we remove the requirement that the size of a combinatorial object must be an integer. 
We require only that the set of sizes of objects in a combinatorial class 
has no accumulation point in~$\bbR$.
That is,
objects in a class $\CCC$ must take their sizes from
some countable set $\Lambda_\CCC$ consisting of a nonnegative increasing sequence of values 
that tends to infinity.
It remains necessary that,
for each $\lambda\in\Lambda_\CCC$,
there are only finitely many objects of size $\lambda$ in~$\CCC$.
We distinguish two cases.

Firstly, if there exists some $\omega>0$ such that $\Lambda_\CCC\subseteq\omega\bbN$ (that is, each object size is a multiple of~$\omega$), then we say that the class $\CCC$ is \emph{rational}.
The sizes of objects in rational classes need not themselves be rational:
every object in $\CCC$ has a rational size if and only if $\omega\in\bbQ$.
Rational classes are not our primary concern. We investigate their asymptotics in Section~\ref{sectRational}.

In contrast, if there is no positive $\omega$ such that $\Lambda_\CCC\subseteq\omega\bbN$, then
we say that $\CCC$ is \emph{irrational}.
As a simple example, consider the class $\TTT=\TTT^{(\beta)}$ of
tilings of a strip of width one
using two types of rectangular tile:
a $1\times1$ square tile, and a tile with dimensions $\beta\times1$ for some $\beta>0$.
The size of a tiling is the length of the tiled strip.
If $\beta$ is irrational, then so is $\TTT$, 
with the set of object sizes being $\{k+\ell\beta:k,\ell\in\bbN\}$.
See Figure~\ref{figTilingT} for an illustration, with $\beta=\sqrt2$.

\vspace{-9pt}\subsubsection*{Ribenboim series}\vspace{-9pt}



To record the sizes of objects in an irrational class,
we use formal power series that admit irrational exponents.
We call these \emph{Ribenboim series} after
Paulo Ribenboim, who, building on the earlier work of
Levi-Civita~\cite{LeviCivita1893}, Hahn~\cite{Hahn1907},
Neumann~\cite{Neumann1949} and Higman~\cite{Higman1952},
investigated their algebraic properties in a series of papers in the 1990s~\cite{ER1990,Ribenboim1991,Ribenboim1992,Ribenboim1994,Ribenboim1995,Ribenboim1997}.
See~\cite{Ehrlich1995} for a historical perspective, and~\cite{KKS2025} for a recent study.

Formally, a Ribenboim series in $z$ with coefficients in a commutative ring $R$ is an element $\sum_{\lambda}c_\lambda z^\lambda$ of the ring $R[[z^{\bbR^{\geq0}}]]$ 
of {Hahn series} with nonnegative exponents such that
the {support} of the exponents $\{\lambda:c_\lambda\neq0\}$ 
contains no finite accumulation point.\footnote{In general, a Hahn series permits exponents whose support is any well-ordered subset of an ordered group.}

For enumeration, we take $R=\bbZ$.
Thus, the 
generating function (\gf{}) of an irrational class~$\CCC$,
which we call an \emph{irrational} generating function (\igf{}),
is the formal Ribenboim series
\[
f_\CCC(z)
\eq \sum_{i\geqs1}
|\CCC_{\lambda_i}|
z^{\lambda_i}
\eq \sum_{\lambda\in\Lambda_\CCC}
|\CCC_\lambda|
z^\lambda
\eq \sum_{\ccc\in\CCC} z^{|\ccc|} ,
\]
where
$\CCC_\lambda\eq\{\ccc\in\CCC:|\ccc|=\lambda\}$ is the set of objects in $\CCC$ of size~$\lambda$,
and
$\lambda_1<\lambda_2<\ldots$
is the total ordering on the elements of~$\Lambda_\CCC$.

For example, the \igf{} for the class $\TTT$ of strip tilings is
\[
f_{\TTT}(z)
\eq \frac1{1\sminus z\sminus z^\beta}
\eq \sum_{k,\ell\geqs0}\tbinom{k+\ell}k z^{k+\ell\beta} .
\]

\vspace{-9pt}\subsubsection*{Asymptotics of irrational classes}\vspace{-9pt}

In general, the coefficients in an \igf{} can fluctuate wildly.
For example, here are seven consecutive terms in the \gf{} for~$\TTT^{(\sqrt2)}$:
\[
8\,z^{1+7\sqrt2},\quad\; z^{11},\quad\; 126\,z^{4+5\sqrt2},\quad\; 120\,z^{7+3\sqrt2},\quad\; z^{8\sqrt2},\quad\; 11\,z^{10+\sqrt2},\quad\; 84\,z^{3+6\sqrt2} .
\]
Thus it makes no sense to consider the asymptotics of the coefficients directly.

Instead, we consider
the number of objects in an irrational class of size at most a given value, which may exhibit
smooth asymptotic behaviour.
Given a class $\CCC$ and any $x\geqs0$, we use $\CCC_{\leqs x}$ to denote the set $\{\ccc\in\CCC:|\ccc|\leqs x\}$ of objects in $\CCC$ of size at most $x$.
Note that the 
structure required of $\Lambda_\CCC$
is sufficient to ensure that each of these sets is finite.
Similarly, if $f(z)\eq \sum_{\lambda\in\Lambda} c_\lambda z^\lambda$ is a Ribenboim series, then
\[
\big[z^{\leqs x}\big]f(z) \eq \sum_{\lambda\leqs x} c_\lambda
\]
extracts the sum of the coefficients of those terms of the series with exponent at most~$x$.
Thus, $\big[z^{\leqs x}\big]f_\CCC(z) \eq |\CCC_{\leqs x}|$.

Analytically, a Ribenboim series
has a radius of convergence in
$[0,\infty]$ and is analytic within its disk of convergence, except on the non-positive real axis, where there is a branch cut.
Moreover, if it has nonnegative coefficients and a finite radius of convergence $\rho$, then it has a singularity at the point $z=\rho$.
See Proposition~\ref{propRibenboimConvergence}.

In order to specify an additional condition that must be satisfied by an irrational class for our main theorem to apply, we introduce a transform that we apply to \igf{}s.

\vspace{-9pt}\subsubsection*{Dirichlet generating functions}\vspace{-9pt}

If $f(z)\eq \sum_{\lambda\in\Lambda} c_\lambda z^\lambda$ is a Ribenboim series,
then we define its \emph{exponential transform} to be the generalised Dirichlet series
\[
F(s) \eq \sum_{\lambda\in\Lambda} c_\lambda e^{-\lambda s} ,
\]
in which $e^{-s}$ has been substituted for~$z$.\footnote{In the special case that $\Lambda=\{\log n:n\geqs1\}$, this is a standard Dirichlet series $\sum_{n\geqs1}a_n\+n^{-s}$; see Section~\ref{sectIntegerFactorizations}.}
The exponential transform of the \igf{} of a combinatorial class is its \emph{Dirichlet generating function} (\dgf{}).

\begin{figure}[t]
\begin{center}
  \begin{tikzpicture}[scale=1.5,>=latex]
    \fill[blue!9] (-1.1,-.75) rectangle (1.1,.75);
    \draw[gray,fill=yellow!75,radius=0.333] (0,0) circle;
    \draw[->] (-1.1,0)--(1.1,0);
    \draw[->] (0,-.75)--(0,.75);
    \draw[ultra thick] (-1.1,0)--(0,0);
    \node at (.4,-.13) {\footnotesize$\rho$};
    \node at (.9,.55) {\footnotesize$z$};
    \fill[radius=.035] (0.333,0) circle;
  \end{tikzpicture}
  \qquad\quad
  \begin{tikzpicture}[scale=2.5,>=latex]
    \draw[white] (0,-.45) rectangle (1,.45);
    \draw[->] (0,0.085)--(1,0.085);
    \node at (.5,.2) {\small$z \,\mapsto\, e^{-s}$};
    \draw[->] (1,-0.085)--(0,-0.085);
    \node at (.5,-.2) {\small$\log(1/z) \,\mapsfrom\, s$};
  \end{tikzpicture}
  \qquad\quad
  \begin{tikzpicture}[scale=0.25,>=latex]
    \fill[blue!4] (-4.6,-4.5) rectangle (8.6,4.5);
    \fill[blue!9] (-4.6,-3.142) rectangle (8.6,3.142);
    \fill[yellow!30] (3,-4.5) rectangle (8.6,4.5);
    \fill[yellow!75] (3,-3.142) rectangle (8.6,3.142);
    \draw[gray] (3,-4.5)--(3,4.5);
    \draw[->] (-4.6,0)--(8.6,0);
    \draw[->] (0,-4.5)--(0,4.5);
    \draw[ultra thick,dotted] (-4.6,3.142)--(8.6,3.142);
    \draw[ultra thick,dotted] (-4.6,-3.142)--(8.6,-3.142);
    \node at (-1.4,-2.4) {\scriptsize$-\pi i$};
    \node at (-1.4,+2.4) {\scriptsize$+\pi i$};
    \node at (3.5,-.9) {\scriptsize$\log(1/\rho)$};
    \node at (7.8,2.2) {\footnotesize$s$};
    \fill[radius=.21] (3,0) circle;
  \end{tikzpicture}
  \vspace{-9pt}
\end{center}
  \caption{The mapping of the region of convergence under the exponential transform}\label{figTransform}
\end{figure}

If a Ribenboim series has radius of convergence $\rho$, then under the exponential transform its cut-disk
region of convergence is mapped into the right open half-plane $\ohplane{\log(1/\rho)}$.
Its circle of convergence is mapped to the \emph{line of convergence} $\re s=\log(1/\rho)$ of the Dirichlet series,
with its dominant singularity $z=\rho$ being mapped to the point $s=\log(1/\rho)$.
See Figure~\ref{figTransform} for an illustration.

If $F(s)$ is the exponential transform of a Ribenboim series $f(z)$, then $F(s)=f(e^{-s})$ as long as $s$~lies in the strip $-\pi<\im s\leqs\pi$.
In general, this identity does not hold for other values of~$s$.
On the other hand, $f(z)=F(\log(1/z))$ for all nonzero $z\in\bbC$.

\vspace{-9pt}\subsubsection*{\Pity{}}\vspace{-9pt}

Suppose $\CCC$ is an irrational combinatorial class whose \igf{} has radius of convergence $\rho\in(0,1)$.
Then we say that $\CCC$ is \emph{\prim{}} or has \emph{\pity{}} if
$\log(1/\rho)$ is the unique singularity of its \dgf{} on
its line of convergence. 

With this definition, we can now state our primary result, which enables the extraction of asymptotics from the \igf{} of a \prim{} class, in an analogous manner to~\eqref{eqClassical} above.
\newcommand{\mainTheoremText}
{
Suppose that $\CCC$ is a \primi{} combinatorial class.
  If its irrational generating function $f_\CCC(z)$ has positive radius of convergence~$\rho<1$, and one can write
  \[
  f_\CCC(z) \eq g(z) \splus \frac{h(z)}{(1-z/\rho)^\alpha} ,
  \]
  where $\alpha\notin-\bbN$, and both $g$ and $h$ are analytic on the cut disk $\overline{D}(0,\rho) \setminus \bbR^{\leqs0}$,
  then
  the asymptotic number of objects in $\CCC$ of size at most $x$ is given by
  \[
  |\CCC_{\leqs x}| \eq
  \big[z^{\leqslant x}\big]f_\CCC(z)
  \;\sim\;
  \frac{h(\rho)}{\log(1/\rho)\,\Gamma(\alpha)} \+ \rho^{-x} \+ x^{\alpha-1} .
  \]
}
\begin{thmO}\label{thmMain}
  \mainTheoremText
\end{thmO}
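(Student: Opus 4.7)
The strategy is to transfer the problem to the Dirichlet side via the exponential transform and then invoke a Tauberian theorem for generalised Dirichlet series. Throughout, write $s_0\eq\log(1/\rho)>0$ and let $F(s)$ denote the \dgf{} of~$\CCC$.

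First, within the strip $-\pi<\Im s\leqs\pi$ the identity $F(s)\eq f_\CCC(e^{-s})$ lets me transport the prescribed singular decomposition of the \igf{} to~$F$. Since $g$ and $h$ are analytic on the cut closed disk $\overline{D}(0,\rho)\setminus\bbR^{\leqs0}$, the compositions $g(e^{-s})$ and $h(e^{-s})$ are analytic on the closed half-strip $\{\Re s\geqs s_0,\ -\pi<\Im s\leqs\pi\}$, so the only singular contribution in this half-strip comes from $(1-e^{-s}/\rho)^{-\alpha}$. The expansion
\[
1-e^{-s}/\rho\eq 1-e^{-(s-s_0)}\eq (s-s_0)\bigl(1+O(s-s_0)\bigr)
\]
then yields, locally near $s_0$,
\[
F(s)\eq \frac{h(\rho)}{(s-s_0)^\alpha}\splus R(s) ,
\]
where $R(s)$ is analytic at $s_0$ or, since $\alpha\notin-\bbN$, has a branch singularity of strictly smaller order.

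Next, I invoke the \iity{} of $\CCC$: $s_0$ is the unique singularity of $F$ on the \emph{entire} line $\Re s\eq s_0$, not merely inside the fundamental strip. Combined with the local expansion above, this places the problem exactly in the setting of a Wiener--Ikehara--Delange type Tauberian theorem for generalised Dirichlet series with nonnegative coefficients: $F$ converges for $\Re s>s_0$, and $F(s)-h(\rho)(s-s_0)^{-\alpha}$ extends continuously across the line of convergence. Applying that theorem delivers
\[
|\CCC_{\leqs x}|\eq\sum_{\lambda\leqs x}|\CCC_\lambda|\ssim\frac{h(\rho)}{s_0\,\Gamma(\alpha)}\+x^{\alpha-1}\+e^{s_0 x}\eq\frac{h(\rho)}{\log(1/\rho)\,\Gamma(\alpha)}\+\rho^{-x}\+x^{\alpha-1} .
\]
The $1/s_0$ in the constant can be cross-checked by a model computation: integration by parts gives $F(s)\eq s\int_0^\infty N(x)e^{-sx}\,dx$ for $N(x)\eq|\CCC_{\leqs x}|$, so matching $N(x)\ssim A\,x^{\alpha-1}e^{s_0 x}$ against $F(s)\ssim h(\rho)(s-s_0)^{-\alpha}$ via $\int_0^\infty x^{\alpha-1}e^{-(s-s_0)x}\,dx\eq\Gamma(\alpha)(s-s_0)^{-\alpha}$ forces $A\eq h(\rho)/\bigl(s_0\Gamma(\alpha)\bigr)$.

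The main obstacle is the Tauberian step itself. Classical Wiener--Ikehara handles only a simple pole on the line of convergence, whereas here $\alpha$ is an arbitrary real number with $\alpha\notin-\bbN$, so the singularity is in general a branch point of non-integer order; this forces the use of a suitably general Tauberian theorem together with a regularity or growth condition on $F(s_0+it)$ as $|t|\to\infty$, which I would expect the paper to establish or cite in a preliminary section. A secondary technical point is to verify carefully that subtracting off $h(\rho)(s-s_0)^{-\alpha}$ (rather than the ``natural'' $h(e^{-s})(1-e^{-s}/\rho)^{-\alpha}$) leaves a function with the required regularity across the whole line, so that the \iity{} hypothesis really does hand $F$ to the Tauberian theorem in the form it wants; this is where the cut-disk analyticity of $g$ and $h$, as opposed to mere analyticity on the open disk, becomes essential.
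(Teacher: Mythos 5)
Your overall strategy is the paper's own: transport the singular expansion through the exponential transform, use \iity{} to get a unique singularity on the whole line $\Re s=\log(1/\rho)$, and finish with a Tauberian theorem. Your cross-check of the constant via $F(s)=s\int_0^\infty N(x)e^{-sx}\,dx$ is precisely the paper's Observation~\ref{obsLaplaceTranform}, and your identification of the leading coefficient via $1-e^{-(s-s_0)}\sim s-s_0$ matches the final step ($H(\mu)=h(\rho)$) of the paper's proof. So the reduction is right.

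The genuine gap is the Tauberian step, which you correctly flag as ``the main obstacle'' but then leave unresolved. Two points. First, the theorem the paper actually invokes (Kable's variant, Proposition~\ref{propTauberian}) asks only that $K(s)=J(s)-L/(s-1)^\alpha$ extend \emph{continuously} to the closed half-plane $\chplane{1}$; there is no growth condition on $F(s_0+it)$ as $|t|\to\infty$ of the kind you anticipate, so that worry is misplaced, but you still need to name a theorem that exists. Second, and more seriously, Kable's theorem requires $\alpha\geqs1$, while Theorem~\ref{thmMain} allows any $\alpha\notin-\bbN$, and the applications lean heavily on $\alpha=-\tfrac12$ (square-root singularities for Motzkin paths, Dyck paths, and trees). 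No off-the-shelf Wiener--Ikehara--Delange statement covers these exponents directly. The paper's Proposition~\ref{propDirichletAsymptotics} closes this gap by differentiating the Laplace-transform identity $n=\ceil{1-\alpha}$ times under the integral sign, so that the Tauberian theorem applies to $x^nA(x/\mu)$ with exponent $\alpha+n\geqs1$, and then recovering the constant from $Z(1)=\widetilde{H}(1)\,\alpha(\alpha+1)\cdots(\alpha+n-1)=L\,\Gamma(\alpha+n)/\bigl(\mu^\alpha\Gamma(\alpha)\bigr)$. Without this device (or an equivalent one), your argument proves the theorem only for $\alpha\geqs1$, which excludes most of the paper's examples.
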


Note that the shape required of an irrational generating function in Theorem~\ref{thmMain} is not sufficient for a class to be primitive.
The unique dominant singularity of an \igf{} may 
correspond to multiple singularities on the line of convergence of the \dgf{}.
In contrast with the smooth asymptotics of \prim{} classes,
irrational classes that are \emph{not} \prim{} may exhibit periodic oscillations in their asymptotics.
An example of such a class is analysed in Section~\ref{sectNotIntrinsicallyIrrational}.


It is interesting to briefly compare Theorem~\ref{thmMain} against the asymptotics of standard combinatorial classes.
If $\AAA$ is a 
class of integer-sized objects with ordinary generating function $f_{\!\AAA}(z)$, then $(1-z)^{-1}f_{\!\AAA}(z)$ is its cumulative generating function:
\[
|\AAA_{\leqs n}| \eq
\big[z^{\leqs n}\big]f_{\!\AAA}(z) \eq
\big[z^n\big]\frac{f_{\!\AAA}(z)}{1-z} .
\]
Thus, if
$f_{\!\AAA}(z)$
has unique dominant singularity~$\rho<1$ such that
\[
f_{\!\AAA}(z) \eq g(z) + \frac{h(z)}{(1-z/\rho)^\alpha} ,
\]
where $\alpha\notin-\bbN$, and $g$ and $h$ are analytic on $\overline{D}(0,\rho)$,
then by~\eqref{eqClassical}, for any nonnegative real~$x$, 
the asymptotic
number of objects in $\AAA$ of size at most $x$ is given by the step function
\[
|\AAA_{\leqs x}| \ssim \frac{h(\rho)}{(1-\rho)\Gamma(\alpha)} \+ \rho^{-\floor{x}} \+ \floor{x}^{\alpha-1} .
\]
If we let $\delta(x)=x-\floor{x}$ denote the fractional part of $x$, then
this matches Theorem~\ref{thmMain} if 
\begin{equation}\label{eqDelta}
  \rho^{\delta(x)} \+ (1-\delta(x)/x)^{\alpha-1} \eq \frac{1-\rho}{\log(1/\rho)} ,
\end{equation}
the solutions of which depend only on the values of $\rho$ and $\alpha$.
If $\rho$ is a simple pole ($\alpha=1$) then this holds exactly whenever the fractional part of $x$ equals
\[
\delta_\rho \defeq \frac
{\log(1/(1-\rho))\:+\:\log\log(1/\rho)}
{\log(1/\rho)} ,
\]
a value that always lies between $0$ and $\frac12$.
When the dominant singularity is not a simple pole, 
since $\delta(x)/x$ tends to zero as $x$ increases,
for each sufficiently large $n\in\bbN$ there exists a $\delta_n\in(0,1)$
for which $x=n+\delta_n$ satisfies~\eqref{eqDelta},
with $\liminfty\delta_n=\delta_\rho$.


\vspace{-9pt}\subsubsection*{Sufficient conditions for \pity{}}\vspace{-9pt}

In order not to have to establish individually the \pity{} of each class in which we are interested,
it is useful to have conditions which guarantee that a class is \prim{}.
The following simple condition (Proposition~\ref{propLinearCond}) is of particular use.

To state it, in an analogous manner to our definition of an irrational class,
we say that a set $S$ of real numbers is \emph{irrational} if there is no $\omega\neq0$ such that $S\subseteq\omega\bbZ$.
In particular, this holds if there exist $x,y\in S$ such that $x/y\notin\bbQ$.

If every singularity $s$ on the line of convergence
of the \dgf{} of a class~$\CCC$
is a root of
an equation of the form
\[
\sum_{\gamma\in\Gamma} c_\gamma \+ e^{-\gamma s} \eq 1,
\]
where $\Gamma$ is an irrational set of positive numbers 
and each $c_\gamma$ is
also
positive,
then $\CCC$ is \prim{}.

\together7
For example, the \dgf{} for the class $\TTT$ of strip tilings,
\[
F_{\TTT}(s)
\eq \frac1{1\sminus e^{-s}\sminus e^{-\beta s}} ,
\]
satisfies this requirement if $\beta\notin\bbQ$.
Hence $\TTT$ is \prim{} if $\beta$ is irrational, and by Theorem~\ref{thmMain} we have 
\[
|\TTT_{\leqs x}| \ssim
\frac{1}{\big(\rho+\beta\+\rho^\beta\big)\log(1/\rho)}
\+ \rho^{-x}
\eq
\frac{\rho^{-x}}{H(\rho)},
\]
where $\rho$ is the unique positive root of
$1-z-z^\beta$,
and
\phantomsection\label{pageEntropy}$H(x)=-x\log x-(1-x)\log(1-x)$,
the latter simplification resulting from the identities
$\rho^\beta=1-\rho$
and $\beta\log\rho=\log(1-\rho)$,
exhibiting a mysterious connection to entropy.

\vspace{-9pt}\subsubsection*{Context}\vspace{-9pt}

The analytic approach to combinatorial enumeration,
expounded and popularised by Flajolet and Sedgewick~\cite{FS2009},
has seen significant recent growth, as evidenced by the publication of several new graduate textbooks taking varied perspectives~\cite{Melczer2021,Mishna2020,PW2013}.

However, we are aware of only one work that considers irrational classes, and that addresses a rather different question:
Garrabrant and Pak~\cite{GP2014} (see also Melczer~\cite[Section~3.4.2]{Melczer2021})
considers the tiling of a strip of length $n+\veps$ with a finite set of irrational length tiles with adjacency constraints,
for some fixed $\veps\in\bbR$ and each $n\in\bbN$.
Their main result is that the ordinary generating functions for these tilings are precisely the diagonals of multivariate $\bbN$-rational functions.
As far as we know, Ribenboim sequences have not previously been used for enumeration, although Richard Stanley
made use of polynomials with irrational exponents in~\cite{Stanley2024}.

The idea of extending the scope of analytic combinatorics to irrational classes was first presented in a MathOverflow question and at a Banff workshop in 2016~\cite{BevanIrrationalMOQuestion,BevanIrrationalBIRS}.
The main result in this paper originally appeared in the Master's thesis of the second author~\cite{Conde2024}.
The authors are particularly grateful to Andrew Elvey Price for discussions and advice, and also to two anonymous referees whose feedback helped us to improve the presentation of this work.

Theorem~\ref{thmMain} only gives the first-order asymptotics of \primi{} classes.
With more detailed analysis it ought to be possible to calculate further terms, 
analogous to the result of Flajolet and Odlyzko~\cite{FO1990} that
the coefficient of $z^n$ in $(1-z/\rho)^{-\alpha}$ admits the asymptotic expansion
\begin{equation}\label{eqFlajoletOdlyzko}
\big[z^n\big]\+\frac1{(1-z/\rho)^{\alpha}} \;\sim\; \frac{1}{\Gamma(\alpha)} \+ \rho^{-n} \+ n^{\alpha-1} \+ \bigg(\!1 \++\+ \sum_{k=1}^\infty \frac{e_k}{n^k}\bigg),
\end{equation}
where $e_k$ is a specific polynomial in $\alpha$ of degree~$2k$.
Another direction to explore would be the establishment of central limit theorems (for example, see~\cite{Hwang2001,HJ2011}).
A suitable limit theorem may be of help in determining the asymptotic enumeration of irrational tilings of a square (see Section~\ref{sectFloorTiling}).

\vspace{-9pt}\subsubsection*{Outline}\vspace{-9pt}

The remaining two sections of this paper are independent of each other and can be read in either order.
In Section~\ref{sectApplications},
we present some conditions that guarantee \pity{},
and then illustrate the use of Theorem~\ref{thmMain} with
a wide variety of applications, mostly irrational analogues of classical combinatorial classes.
We also investigate rational 
classes, irrational classes that are not \prim{}, and phase transitions in families of \primi{} classes.
The somewhat shorter Section~\ref{sectTheory} contains the required analytic theory of Ribenboim and Dirichlet series, culminating in a proof of Theorem~\ref{thmMain}.

\section{Applications}\label{sectApplications}


The heart of this section consists of the application of our approach to a variety of \primi{} classes.
We also analyse rational 
classes,
and irrational classes that are not \prim{}.
We begin, in Section~\ref{sectIntrinsicIrrationality}, by establishing conditions under which we can guarantee \pity{},
and we conclude in Section~\ref{sectPhaseTransitions} with an investigation of phase transitions in families of \prim{} classes.

\subsection{Establishing \pity{}}\label{sectIntrinsicIrrationality}

In this section, we present three propositions that can be used to establish that a class is \prim{}.
We begin with a very simple condition ensuring \pity{}, that is of much wider applicability than might be expected.
Its proof is elementary.

\begin{propO}\label{propLinearCond}
  If every singularity $s$ on the line of convergence of the Dirichlet generating function of a combinatorial class~$\CCC$
  is a root of
  an equation of the form
  \begin{equation}\label{eqLinearCond}
  \sum_{\gamma\in\Gamma} c_\gamma \+ e^{-\gamma s} \eq 1 ,
  \end{equation}
  where $\Gamma$ is an irrational set of positive numbers 
  and each $c_\gamma$ is
  also
  positive,
  then $\CCC$ is \prim{}.
\end{propO}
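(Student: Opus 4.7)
The plan is direct. Set $s_0 \eq \log(1/\rho)$. By Proposition~\ref{propRibenboimConvergence}, the \dgf{} of $\CCC$ has a singularity at $s_0$ on its line of convergence, so by hypothesis $s_0$ is itself a root of some equation of the stated form, $\sum_{\gamma\in\Gamma} c_\gamma\+ e^{-\gamma s_0} \eq 1$, in which every summand is a strictly positive real and the total equals~$1$. My goal is to show that no point $s \eq s_0 + it$ with $t\neq0$ on the line $\Re s \eq s_0$ can be a singularity of the \dgf{}.

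Suppose, for contradiction, that such an $s$ is a singularity. By hypothesis it satisfies an equation of the stated form; in the natural reading---and certainly in the canonical situation where every singularity on the line arises as a zero of a single denominator $1 - \sum_\gamma c_\gamma\+ e^{-\gamma s}$, as with $F_\TTT(s) \eq 1/(1 - e^{-s} - e^{-\beta s})$---it satisfies the \emph{same} equation as $s_0$. Writing $e^{-\gamma s} \eq e^{-\gamma s_0}\big(\cos(\gamma t) - i\sin(\gamma t)\big)$ and taking real parts, I obtain
\[
\sum_{\gamma\in\Gamma} c_\gamma\+ e^{-\gamma s_0}\+ \cos(\gamma t) \eq 1.
\]
Setting $w_\gamma \eq c_\gamma\+ e^{-\gamma s_0} > 0$, the hypothesis for $s_0$ gives $\sum_{\gamma\in\Gamma} w_\gamma \eq 1$, while the hypothesis for $s$ gives $\sum_{\gamma\in\Gamma} w_\gamma\+ \cos(\gamma t) \eq 1$. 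Since each $\cos(\gamma t) \leqs 1$, equality in this convex combination forces $\cos(\gamma t) \eq 1$ for every $\gamma\in\Gamma$, i.e.\ $\gamma t \in 2\pi\+\bbZ$ for every $\gamma\in\Gamma$.

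Now setting $\omega \eq t/(2\pi) \neq 0$ yields $\omega\Gamma \subseteq \bbZ$, contradicting the assumed irrationality of $\Gamma$. Hence $t\eq0$, so $s_0$ is the unique singularity of the \dgf{} on its line of convergence, and $\CCC$ is \ii{} by definition. There is no substantive technical obstacle: the argument reduces to the elementary observation that a convex combination of reals bounded above by~$1$, with strictly positive weights summing to~$1$, can equal~$1$ only when every entry equals~$1$. The sole interpretive subtlety is that the hypothesis must be read so that $s_0$ satisfies the same equation as any other candidate singularity---automatic in the canonical applications, and in particular in the motivating example of~$\TTT$.
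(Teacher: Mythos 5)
Your proof is correct and follows essentially the same route as the paper's: both identify the real point $\mu=\log(1/\rho)$ on the line of convergence as a singularity (hence, by hypothesis, a root, so the positive weights $c_\gamma e^{-\gamma\mu}$ sum to $1$), and both use the fact that $\Re e^{a+bi}\leqs e^a$ with equality only when $b\in2\pi\bbZ$ to force $\gamma t\in2\pi\bbZ$ for every $\gamma\in\Gamma$, contradicting the irrationality of $\Gamma$ when $t\neq0$. Your write-up merely makes explicit the convexity step and the reading of the hypothesis (one fixed equation for all singularities) that the paper leaves implicit.
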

\begin{proof}
  Let $\mu+i\bbR$ be the line of convergence of the \dgf{}.
  Since the \dgf{} has positive coefficients, $\mu$ is a singularity (see Proposition~\ref{propDirichletSingularity}).

  Note that, for any $a,b\in\bbR$, we have $\re e^{a+bi} \sleqs e^a$, with equality only if $b\in2\pi\bbZ$.

  Thus, $\mu+ti$ is a root of \eqref{eqLinearCond}
  only if, for each $\gamma\in\Gamma$, we have
  $t\gamma\in2\pi\bbZ$.
  If $t\neq0$, then this implies that $\frac{t}{2\pi}\Gamma\subseteq\bbZ$, so $\Gamma$ is not irrational.
\end{proof}

%
%

Note that $\Gamma$ does not need to be finite. The sum may be any irrational Dirichlet series with positive coefficients.

Although Proposition~\ref{propLinearCond} is stated in terms of \dgf{}s,
the simplicity of the exponential transform means that
a class can often be seen to be \prim{}
by inspecting its \igf{}.
For example, suppose $\FFF\eq\seq{\GGG}$ consists of sequences of objects from some 
irrational class $\GGG$.
Then the \igf{}s of $\FFF$ and $\GGG$ satisfy the identity
\[
f(z)\eq\frac1{1-g(z)}.
\]
The class $\FFF$ is said to be \emph{supercritical} (see~\cite[Section V.2]{FS2009}) if
$f$ has a dominant singularity at the unique positive root of $g(z)=1$.
In this case, the conditions of Proposition~\ref{propLinearCond} are satisfied by~$\FFF$. 
Thus, we have the following structural condition for \pity{}.

\begin{propO}\label{propSupercritical}
  If $\GGG$ is irrational and $\FFF\eq\seq{\GGG}$ is supercritical, then $\FFF$ is \prim{}.
\end{propO}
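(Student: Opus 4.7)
The plan is to apply Proposition~\ref{propLinearCond} directly, with the key observation being that supercriticality forces the singularities of the \dgf{} on its line of convergence to come entirely from the vanishing of the denominator.

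Let $g(z)\eq\sum_{\gamma\in\Gamma}c_\gamma z^\gamma$ be the \igf{} of~$\GGG$, with $\Gamma\eq\Lambda_\GGG$ (excluding $0$, if present) and each $c_\gamma>0$. Since $\GGG$ is irrational, $\Gamma$ is an irrational set. Let $r$ denote the radius of convergence of~$g$, and let $\rho$ be the unique positive root of $g(z)\eq1$; supercriticality (per \cite{FS2009}) means $0<\rho<r$. Then $f(z)\eq 1/(1-g(z))$ has $\rho$ as its dominant singularity, and applying the exponential transform yields
\[
F(s) \eq \frac{1}{1\sminus G(s)},
\qquad\text{where}\qquad
G(s) \eq \sum_{\gamma\in\Gamma} c_\gamma\+ e^{-\gamma s}
\]
is the \dgf{} of~$\GGG$. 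The line of convergence of $F$ is $\Re s \eq \log(1/\rho)$.

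Next I would argue that every singularity of $F$ on this line arises as a root of $G(s)\eq1$. Because $\rho<r$, the Dirichlet series $G$ has line of convergence $\Re s\eq\log(1/r)$, which lies strictly to the left of $\log(1/\rho)$. Hence $G$ is analytic in an open half-plane containing the entire line $\Re s\eq\log(1/\rho)$, and the only way $F\eq 1/(1-G)$ can fail to be analytic at a point $s$ on that line is if $G(s)\eq1$.

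Finally, each such $s$ satisfies an equation of the form \eqref{eqLinearCond} with positive coefficients $c_\gamma$ indexed by the irrational set $\Gamma$. Proposition~\ref{propLinearCond} then immediately implies that $\FFF$ is \ii{}. The only mildly delicate point is the verification that no singularities of $F$ on its line of convergence are hidden inside $G$ itself; but this is precisely what supercriticality buys us, and the rest of the proof is essentially a one-line invocation of Proposition~\ref{propLinearCond}.
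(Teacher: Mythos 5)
Your argument is correct and follows essentially the same route as the paper, which justifies the proposition in the paragraph preceding its statement: write $f(z)=1/(1-g(z))$, note that supercriticality places the dominant singularity at the unique positive root of $g(z)=1$, and invoke Proposition~\ref{propLinearCond}. The only difference is that you spell out the detail the paper leaves implicit, namely that $\rho<r$ makes $G$ analytic on a neighbourhood of the line of convergence, so every singularity of $F$ there must be a root of $G(s)=1$.
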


When applying Proposition~\ref{propLinearCond}, it is essential to take into account every contribution to the dominant singularity.
For example, a class with \igf{}
\[
\frac1{\big(1-z-3\+z^{\log_2\!6}\big)\big(1-z-5\+z^{\log_2\!10}\big)}
\]
has a double pole dominant singularity at $z=\frac12$ and is \prim{},
both factors being irrational,
whereas a class with \igf{}
\[
\frac1{\big(1-z-3\+z^{\log_2\!6}\big)\big(1-5\+z^{\log_2\!5}\big)} ,
\]
which also has a double pole dominant singularity at $z=\frac12$,
is not \prim{},
the second factor yielding singularities in the \dgf{} at $\log2+2k\pi i/\log_25$, for each $k\in\bbZ$.

Proposition~\ref{propLinearCond} can also be applied when the dominant singularity is a branch point rather than a pole.
See Section~\ref{sectQuadratic} for several examples.

In general, it is not possible to weaken
the requirement that each $c_\gamma$ in \eqref{eqLinearCond} must be positive.
For example, a class with \igf{}
\[
\frac1{1 - 2\+z - z^\beta + 2\+z^{1+\beta}} \eq \frac1{\big(1-2z\big)\big(1-z^\beta\big)},
\]
which has its dominant singularity at $z=\frac12$ (for any positive $\beta$),
is not \prim{},
its \dgf{} having singularities at $\log2+2k\pi i$ ($k\in\bbZ$).

However, when the dominant singularity satisfies an equation of the form $\sum_{\gamma\in\Gamma} c_\gamma \+ z^\gamma \eq 1$ in which
some of the coefficients are negative, it is sometimes possible to rearrange the \igf{} so that it then meets the requirements of Proposition~\ref{propLinearCond}.
See Section~\ref{sectAdjacencyConstraints} for one approach of wide applicability.

Here is another condition guaranteeing \pity{}, that can be deduced from Proposition~\ref{propLinearCond} by rearrangement:

\begin{propO}\label{propNonlinearCond}
Suppose $\mu+i\bbR$ is the line of convergence of the Dirichlet generating function of a combinatorial class~$\CCC$.
  Suppose that every singularity on this line is a root of an equation of the form
  \[
  \bigg(1 - \sum_{\delta\in\Delta} a_\delta \+ e^{-\delta s}\bigg)^{\!\!k} \sminus \sum_{\gamma\in\Gamma} c_\gamma \+ e^{-\gamma s} \eq 0 ,
  \]
  for some integer $k\geqs2$, where
  each $\gamma\in\Gamma$ and each $\delta\in\Delta$ is positive,
  as is each $c_\gamma$ and each $a_\delta$.
  If the set $\Gamma\cup\Delta$ is irrational, then $\CCC$ is \prim{}.
\end{propO}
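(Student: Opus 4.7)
The plan is to adapt the moduli-comparison argument from the proof of Proposition~\ref{propLinearCond}. Write $x(s) \eq \sum_{\delta \in \Delta} a_\delta\+e^{-\delta s}$ and $y(s) \eq \sum_{\gamma \in \Gamma} c_\gamma\+e^{-\gamma s}$, so the hypothesis at every singularity on the line of convergence $\mu + i\bbR$ reads $(1 - x(s))^k \eq y(s)$. Set $X \eq x(\mu)$ and $Y \eq y(\mu)$; both are positive real numbers, with $(1-X)^k \eq Y$ because $\mu$ itself is a singularity (Proposition~\ref{propDirichletSingularity}). A preliminary step is to establish $1 - X > 0$, so that $1-X \eq Y^{1/k}$ as positive reals: for odd $k$ this is immediate from $Y > 0$, while for even $k$ it follows from positivity of the coefficients combined with continuity of $x$ along the real axis to the right of $\mu$.

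Now consider an arbitrary singularity $s \eq \mu + ti$ on the line of convergence. The termwise triangle inequality gives $|x(s)| \leqs X$ and $|y(s)| \leqs Y$, with equality in each case only if every $\delta \in \Delta$ (respectively every $\gamma \in \Gamma$) satisfies $t\delta \in 2\pi\bbZ$ (respectively $t\gamma \in 2\pi\bbZ$). The reverse triangle inequality then gives
\[
|1 - x(s)|^k \;\geqs\; \big(1 - |x(s)|\big)^k \;\geqs\; (1 - X)^k \eq Y \;\geqs\; |y(s)|,
\]
while the hypothesis forces $|1 - x(s)|^k \eq |y(s)|$. Hence equality holds throughout the chain: from $|y(s)| \eq Y$ we conclude that $t\gamma \in 2\pi\bbZ$ for every $\gamma \in \Gamma$, and from $|1 - x(s)| \eq 1 - |x(s)|$ together with $|x(s)| \eq X$ we conclude that $t\delta \in 2\pi\bbZ$ for every $\delta \in \Delta$. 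Consequently $\tfrac{t}{2\pi}(\Gamma \cup \Delta) \subseteq \bbZ$, and irrationality of $\Gamma \cup \Delta$ forces $t \eq 0$. Thus $\mu$ is the unique singularity on its line of convergence, and $\CCC$ is \ii{}.

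The main obstacle is the preliminary claim $1 - X > 0$. For even $k$ the equation $(1-X)^k \eq Y > 0$ does not algebraically rule out $X > 1$, and if $X > 1$ then the chain of inequalities above breaks down, since $|1 - x(s)| \geqs 1 - |x(s)|$ no longer provides a useful lower bound once $|x(s)| > 1$. In every natural application $X < 1$ holds for positivity reasons (typically $1 - x(s)$ appears as a denominator arising from a sequence construction, and would otherwise vanish at some real $s > \mu$, contradicting analyticity there), but a self-contained proof either needs to split into cases on the parity of $k$ and the sign of $1 - X$, or to assume $X < 1$ as a mild auxiliary hypothesis.
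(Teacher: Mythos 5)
Your argument is essentially correct, but it takes a genuinely different route from the paper. The paper does not argue with moduli at all: writing $G(s)$ for the $\Gamma$-sum and $H(s)$ for the $\Delta$-sum, it rewrites the equation as $1 \eq G(s)/(1-H(s))^k$, expands $(1-H(s))^{-k} \eq \big(1+H(s)+H(s)^2+\cdots\big)^k$ into a single Dirichlet series $\sum_\lambda b_\lambda e^{-\lambda s}$ with positive coefficients supported on $\Lambda \eq \{\gamma+\beta : \gamma\in\Gamma,\ \beta\in\mathrm{span}(\Delta)\}$, notes that $\Lambda$ is irrational whenever $\Gamma\cup\Delta$ is, and then simply invokes Proposition~\ref{propLinearCond}. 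That reduction is shorter and reuses the earlier lemma; your direct triangle-inequality chain is self-contained and makes the mechanism explicit. Two remarks. First, the obstacle you flag --- needing $1-X>0$, i.e.\ $H(\mu)<1$, when $k$ is even --- is not something the paper's proof escapes: the geometric expansion of $(1-H(s))^{-k}$ converges at $s=\mu$ only when $H(\mu)<1$, so the published argument carries exactly the same implicit hypothesis, and you are not missing an idea that the paper supplies. Second, a small slip: equality in $|y(s)|\leqs Y$ (modulus, rather than real part) forces only that all terms $c_\gamma e^{-\gamma\mu}e^{-i\gamma t}$ share a common argument, not that $t\gamma\in2\pi\bbZ$; Proposition~\ref{propLinearCond} works with real parts, where the stronger conclusion is immediate. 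In your setup this is easily repaired: once $|1-x(s)|\eq 1-|x(s)|$ and $|x(s)|\eq X$ force $x(s)$ to be the nonnegative real $X$, the equation gives $y(s)\eq(1-X)^k\eq Y$, a positive real, and comparing real parts termwise then yields $\cos(\gamma t)\eq1$, hence $t\gamma\in2\pi\bbZ$, for every $\gamma$; the same real-part comparison disposes of $\Delta$.
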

\begin{proof}
  Let $G(s)=\sum_{\gamma\in\Gamma} c_\gamma \+ e^{-\gamma s}$
  and $H(s)=\sum_{\delta\in\Delta} a_\delta \+ e^{-\delta s}$.

  Every singularity $s$ on the line of convergence of the \dgf{} satisfies the equation
  \[
  1
  \eq \frac{G(s)}{\big(1-H(s)\big)^k}
  \eq G(s)\+\big(1+H(s)+H(s)^2+\ldots\big)^k
  \eq \sum_{\lambda\in\Lambda} b_\lambda\+ e^{-\lambda s} ,
  \]
  for certain $b_\lambda$, all positive, where
  \[
  \Lambda \eq \big\{ \gamma+\beta : \gamma\in\Gamma,\, \beta\in\mathrm{span}(\Delta) \big\} ,
  \]
  where we use $\mathrm{span}(\Delta)$ to denote the set of
  finite $\bbN$-linear combinations of elements of $\Delta$.

  The set $\Lambda$ is irrational, since $\Gamma\cup\Delta$ is.
  Thus the conditions of Proposition~\ref{propLinearCond} are satisfied and $\CCC$ is \prim{}.
\end{proof}

\subsection{Tiling and packing}\label{sectTiling}


In this section, we illustrate the scope of our approach by exploring the asymptotic enumeration of a variety of types of tiling and packing,
including the establishment of a general result concerning the \pity{} of tilings with adjacency constraints.
We also look at
the structure of a typical tiling,
at irrational tilings which are not \prim{},
and at how an irrational class of tilings relates to a classical number-theoretic problem concerning integer factorizations.
We conclude with a natural open question that seems to require some additional ideas.

\vspace{-9pt}\subsubsection{Strips of width 1}\vspace{-9pt}

Generalising the example from the introduction,
given
an
irrational set of tile lengths $\Gamma\subset\bbR^{>0}$,
let $\TTT^\Gamma$ be the class of tilings of a strip of width one
with tiles of dimensions $\{\gamma\times1:\gamma\in\Gamma\}$, the size of a tiling being the length of the tiled strip.
This class has \igf{}
\[
f_{\TTT^\Gamma}(z) \eq
\frac1{1\sminus\sum_{\gamma\in\Gamma} z^{\gamma}} ,
\]
and is easily seen to be \prim{} by Proposition~\ref{propLinearCond}.
Applying Theorem~\ref{thmMain} then yields
\begin{equation}\label{eqTiling}
\big|\TTT^\Gamma_{\leqs x}\big| \ssim \frac{1}{\log(1/\rho)\+\sum_{\gamma\in\Gamma} \gamma \+\rho^{\gamma}} \+ \rho^{-x},
\end{equation}
where $\rho$ is the unique positive root of
$\sum_{\gamma\in\Gamma} z^{\gamma}=1$.


\begin{figure}[t]
\begin{center}
\begin{tikzpicture}[scale=0.9]
\fill[white]            (-0.787,0)  rectangle (0,1);
\draw[thick,fill=blue!30]   (0,0)      rectangle (1,1);
\draw[thick,fill=yellow!50] (1,0)      rectangle (2.414,1);
\draw[thick,fill=blue!30]   (2.414,0)  rectangle (3.414,1);
\draw[thick,fill=yellow!50] (3.414,0)  rectangle (4.828,1);
\draw[thick,fill=yellow!50] (4.828,0)  rectangle (6.243,1);
\draw[thick,fill=blue!30]   (6.243,0)  rectangle (7.243,1);
\draw[thick,fill=blue!30]   (7.243,0)  rectangle (8.243,1);
\draw[thick,fill=blue!30]   (8.243,0)  rectangle (9.243,1);
\fill[yellow!25]              (9.243,0)  rectangle (10.657,1);
\draw[thick,fill=yellow!50]   (9.243,0)  rectangle (9.8696,1);  
\end{tikzpicture}
\end{center}
\vspace{-12pt}
\caption{A tiling with a final partial tile of a strip of length $\pi^2$, using square tiles and tiles of length~$\sqrt2$}\label{figTilingPartialTile}
\end{figure}

\vspace{-9pt}\subsubsection{Tilings with a final partial tile}\vspace{-9pt}

The set 
$\TTT^\Gamma_{\leqs x}$
consists of \emph{partial} tilings of a strip of length $x$, where each tiling $\ttt$ has a gap of length $x-|\ttt|$ (which may be zero) at the end.
It seems somewhat more natural to consider tilings in which the final tile (only) may be a \emph{partial tile}, a tile which may have any length less than the greatest tile length in~$\Gamma$.
See Figure~\ref{figTilingPartialTile} for an illustration.
(We consider a two-dimensional variant below in Section~\ref{sectFloorTiling}.)

Note that a tiling with a possible final partial tile may have \emph{any} nonnegative real size.
Thus sets of such tilings do not themselves have Ribenboim series \igf{}s.

Suppose $\Gamma=\{\gamma_1<\gamma_2<\ldots<\gamma_k\}$ is finite.
There are two possibilities to consider.
Either tiles can only be distinguished by their length, or else they are distinguishable in some other way (for example, by colour).

Let $\UUU^\Gamma$ be the set of tilings with a possible final partial tile using tiles indistinguishable except by their length.
Then, since a partial tile can have any length up to $\gamma_k$, for any $x\geqs\gamma_k$ we have
\[
\big|\UUU^\Gamma_x\big| \eq \big|\TTT^\Gamma_{\leqs x}\big| \sminus \big|\TTT^\Gamma_{\leqs x-\gamma_k}\big| ,
\qquad \text{and thus} \qquad
\big|\UUU^\Gamma_x\big| \ssim \big(1-\rho^{\gamma_k}\big)\big|\TTT^\Gamma_{\leqs x}\big| .
\]
In contrast, let $\VVV^\Gamma$ be the set of tilings with a possible final partial tile using tiles distinguishable by colour.
The number of possible colours for a final partial tile depends on its length, since a partial tile of colour $i$ must have length less than~$\gamma_i$.
Thus for any $x\geqs\gamma_k$ we have
\[
\big|\VVV^\Gamma_x\big| \eq
\sum_{i=1}^{k}(k+1-i)\left(
\big|\TTT^\Gamma_{\leqs x-\gamma_{i-1}}\big|-\big|\TTT^\Gamma_{\leqs x-\gamma_i}\big|
\right)
\eq
k\big|\TTT^\Gamma_{\leqs x}\big| \sminus \sum_{i=1}^k \big|\TTT^\Gamma_{\leqs x-\gamma_i}\big| ,
\]
where $\gamma_0=0$. 
Here, the factor $k+1-i$ is the number distinct tiles which are longer than the space available for the partial tile, which has length at least $\gamma_{i-1}$ but less than~$\gamma_i$.
Only tiles that are too long can be cut down to form a partial tile.

Since $\sum_{\gamma\in\Gamma} \rho^{\gamma}=1$, after simplification, we have
\[
\big|\VVV^\Gamma_x\big| \ssim \big(|\Gamma|-1\big) \big|\TTT^\Gamma_{\leqs x}\big|,
\]
a result which begs for a more direct combinatorial explanation.
Indeed, we have the following identity:

\begin{propO}
The number of coloured tilings with a possible final partial tile of a strip of length $x$ satisfies
\[
\big|\VVV^\Gamma_x\big| \eq \big(|\Gamma|-1\big) \big|\TTT^\Gamma_{<x}\big| \splus 1 .
\]
\end{propO}
\begin{proof}
  Each tiling in $\VVV^\Gamma_x$ can be created uniquely from a strict partial tiling $\ttt \in \TTT^\Gamma_{<x}$ by extending it with final nonempty run of tiles of the same colour,
  where this colour differs from the colour of the last tile in $\ttt$.
  There are $|\Gamma|-1$ choices for this colour, unless $\ttt$ is empty in which case any of the colours may be chosen.
  Note that we need $|\ttt|<x$ since we need space for the final monochrome run of tiles.
\end{proof}


\begin{figure}[t]
\begin{center}
\begin{tikzpicture}[scale=0.9]
\draw[thick,fill=yellow!50] (-4.243,0)      rectangle (-2.828,1);
\draw[thick,fill=yellow!50] (-2.828,0)      rectangle (-1.414,1);
\draw[thick,fill=yellow!50] (-1.414,0)      rectangle (0,1);
\draw[thick,fill=blue!30]   (0,0)      rectangle (1,1);
\draw[thick,fill=yellow!50] (1,0)      rectangle (2.414,1);
\draw[thick,fill=blue!30]   (2.414,0)  rectangle (3.414,1);
\draw[thick,fill=yellow!50] (3.414,0)  rectangle (4.828,1);
\draw[thick,fill=white] (4.828,0)  rectangle (5.6266,1);
\end{tikzpicture}
\end{center}
\vspace{-12pt}
\caption{A maximal packing of a strip of length $\pi^2$, using square tiles and tiles of length~$\sqrt2$}\label{figPacking}
\end{figure}

\vspace{-9pt}\subsubsection{Maximal packings}\vspace{-9pt}

Complementary to these tilings, are \emph{maximal packings} of a strip, $\PPP^\Gamma$.
A maximal packing of a strip of length $x$ consists of a partial tiling of the strip having length greater than $x-\gamma_1$ so it can't be extended to a longer partial tiling.
See Figure~\ref{figPacking} for an illustration.
Hence,
\[
\big|\PPP^\Gamma_x\big| \eq \big|\TTT^\Gamma_{\leqs x}\big| \sminus \big|\TTT^\Gamma_{\leqs x-\gamma_1}\big| ,
\qquad \text{and thus} \qquad
\big|\PPP^\Gamma_x\big| \ssim \big(1-\rho^{\gamma_1}\big)\big|\TTT^\Gamma_{\leqs x}\big| .
\]

\vspace{-9pt}\subsubsection{The structure of a typical tiling}\vspace{-9pt}

The distribution of the value of a parameter can be extracted from a bivariate \igf{} in the normal way (see~\cite[Proposition~III.2]{FS2009}).
Specifically, if $f(z,u)$ is the bivariate \igf{} of a class $\CCC$ in which $u$ marks the value of some parameter $\upsilon:\CCC\to\bbR$ (which could take any real values), then the mean value of $\upsilon$ over objects of size at most $x$ is given by
\begin{equation}\label{eqExpec}
\expecn{\leqs x}{\upsilon}
\eq
\frac{\big[z^{\leqs x}\big]f_u(z,1)}{\big[z^{\leqs x}\big]f(z,1)} .
\end{equation}
Higher moments and concentration results can be established by taking higher derivatives.

Given a tiling $\ttt\in\TTT^\Gamma$ and some $\beta\in\Gamma$, let $L_\beta(\ttt)$ be the sum of the lengths of the $\beta$-length tiles in $\ttt$. 
Then, the 
bivariate
\igf{} in which $u$ marks~$L_\beta$ 
is
\[
f
(z,u) \eq
\frac1{1-u^\beta\+z^\beta-\sum_{\gamma\in\Gamma\setminus\{\beta\}} z^{\gamma}} ,
\]
and
\[
f_u(z,1) \eq
\frac{\beta\+z^\beta}{\big(1\sminus\sum_{\gamma\in\Gamma} z^{\gamma}\big)^2}
.
\]
By Proposition~\ref{propLinearCond}, 
this is
\prim{}.
Using~\eqref{eqExpec}, and applying Theorem~\ref{thmMain} then yields the asymptotic expectation for~$L_\beta$: 
\[
\expecn{\leqs x}{L_\beta}
\ssim
\frac{\beta\+\rho^\beta}{\sum_{\gamma\in\Gamma} \gamma \+\rho^{\gamma}}\+x
.
\]
Therefore,
the asymptotic ratio of the contribution of $\alpha$-length and $\beta$-length tiles to a tiling equals ${\alpha\+\rho^\alpha}/{\beta\+\rho^\beta}$.

If $f(z,u)$ is the bivariate \igf{} of a \prim{} class, in which $u$ marks the value of a parameter $\upsilon$, then
it is not immediately obvious that $\expecn{\leqs x}{\upsilon}$
gives information about the structure of \emph{large} objects in the class.
However,
if the class grows exponentially, and $\expecn{\leqs x}{\upsilon}$ is polynomial in $x$, then it is easy to check that for any $c>0$ we have
\[
\expecn{\leqs x}{\upsilon}
\ssim
\frac{\big[z^{\leqs x}\big]f_u(z,1) - \big[z^{\leqs x-c}\big]f_u(z,1)}{\big[z^{\leqs x}\big]f(z,1)-\big[z^{\leqs x-c}\big]f(z,1)} ,
\]
the asymptotics of $\expecn{\leqs x}{\upsilon}$ thus being the same as
the mean value of $\upsilon$ for objects whose size lies in the half-open interval $(x-c,x]$.



\vspace{-9pt}\subsubsection{Ordered integer factorizations}\vspace{-9pt}\label{sectIntegerFactorizations}

The asymptotics given in equation~\eqref{eqTiling} above do not require the set of tiles to be finite.
Suppose $\Gamma=\{\log k:k\geqs2\}$.
Then $\big|\TTT^\Gamma_{\log n}\big|$, as well as being
the number of tilings
of a strip of length $\log n$
using tiles of logarithmic length, is also the number of \emph{ordered factorizations} of~$n$.
For example, the integer 12 has eight distinct ordered factorizations:
\[
12 \qquad 6\!\times\!2 \qquad 4\!\times\!3 \qquad 3\!\times\!4 \qquad 2\!\times\!6 \qquad 3\!\times\!2\!\times\!2 \qquad 2\!\times\!3\!\times\!2 \qquad 2\!\times\!2\!\times\!3 .
\]
This is a classical problem,
the question of establishing the asymptotics of $|\TTT^\Gamma_{\leqs\log x}\big|$ being first investigated by Kalm\'ar in the 1930s
(see also entries \oeis{A074206} and \oeis{A173382} in the OEIS~\cite{OEIS}).
The \igf{} and \dgf{} for these tilings can be expressed nicely in terms of the Riemann zeta function:
\[
F_{\TTT^\Gamma}(s)
\eq
\frac1{1-\sum_{k\geqs2}e^{-s\log k}}
\eq
\frac1{2-\zeta(s)}
.
\]
Our method immediately yields the result first deduced by Kalm\'ar~\cite{Kalmar1931,Kalmar1931a} that the asymptotic number of ordered factorizations of integers less than or equal to~$x$ is
\[
|\TTT^\Gamma_{\leqs\log x}\big|
\ssim
-\frac1{\mu\+\zeta'(\mu)}\+x^\mu \sapprox 3.14294\+ x^{1.72865} ,
\]
where $\mu$ is the positive real root of the equation $\zeta(s)=2$.
And,
using the trivariate \igf{}
\[
f_{\TTT^\Gamma}(x,u,v)
\eq
\frac1{2 - (u-1)z^{\log m} - v\+\zeta(\log(1/z))} ,
\]
we can also establish
that the expected number of factors in an ordered factorization of an integer at most $x$ is asymptotically
\[
-\frac2{\zeta'(\mu )} \log x \sapprox 1.10002 \log x ,
\]
and that the expected number of factors equal to a fixed integer $m$ is
\[
-\frac{m^{-\mu }}{\zeta'(\mu )}\log x \sapprox 0.55001 \+m^{-1.72865}\log x .
\]
For example, the proportion of factors equal to 2 is asymptotically $2^{-\mu}/2\approx15.086\%$.

For a much more detailed analysis of ordered factorizations, see Hwang~\cite{Hwang2000}.

\vspace{-9pt}\subsubsection{Infinite sets of rational tiles}\vspace{-9pt}

We do not need irrational tile lengths for a class of tilings to be \prim{}, since
an infinite set of \emph{rational} tile lengths will be irrational if their denominators are not bounded.

As an example, suppose $\Gamma=\{k+2^{-k}:k\geqs0\}=\{1,\,1\frac12,\,2\frac14,\,3\frac18,\,\ldots\}$.
The \igf{} of $\TTT^\Gamma$ is not a Puiseux series, as would be the case for a rational class (see Section~\ref{sectRational}), because denominators are unbounded.
Solving numerically, equation~\eqref{eqTiling} yields the approximate asymptotics
for $\big|\TTT^\Gamma_{\leqs x}\big|$ of $0.69622 \, x^{2.31329}$.

\vspace{-9pt}\subsubsection{Adjacency constraints}\label{sectAdjacencyConstraints}\vspace{-9pt}

Suppose we have three types of tile, of length $\alpha$, $\beta$ and $\gamma$, with $\{\alpha,\beta,\gamma\}$ irrational.
If a \mbox{$\beta$-length} tile is not permitted to immediately follow an \mbox{$\alpha$-length} tile, then the \igf{} for the class of permitted strip tilings is
\[
f(z)
\eq
\frac1{1-z^\alpha-z^\beta-z^\gamma+z^{\alpha+\beta}} .
\]
The denominator of this \igf{} does not have the right structure to use either of Propositions~\ref{propLinearCond} or~\ref{propNonlinearCond} directly to establish \pity{}.
However, a consideration of the combinatorial structure enables a rearrangement of the \igf{} that does meet the requirements of Proposition~\ref{propLinearCond}:
\[
f(z)
\eq
\frac1{ \left(1-z^\beta-\frac{z^\gamma}{1-z^\alpha}\right)(1-z^\alpha)}
\eq
\frac1{ \left(1-z^\beta-\sum_{k\geqs0}z^{k\alpha+\gamma}\right)(1-z^\alpha)} .
\]
Here, we use the fact that the set of valid tilings can be represented by the regular expression $(\beta+\alpha^{\!\STAR}\gamma)^{\!\STAR}\alpha^{\!\STAR}$.
The dominant singularity comes from the first factor in the denominator, so by Proposition~\ref{propLinearCond} we know that these tilings are \prim{} and can extract the asymptotics using Theorem~\ref{thmMain}.

This approach is of very general applicability.
Any non-degenerate class of strip tilings with a finite number of adjacency constraints can be defined by a regular expression (or equivalently, by a deterministic finite automaton), in which
the dominant singularity comes from sequences of sub-tilings from some (possibly infinite) set $\SSS$ (see \cite[Sections~V.3 and~V.5]{FS2009}).
If $\SSS$ is irrational, then the class of tilings is \prim{}.
Note that to check that $\SSS$ is irrational only requires finding a single pair of sub-tilings $\sss_1,\sss_2\in\SSS$ such that $|\sss_1|/|\sss_2|\notin\bbQ$.


\vspace{-9pt}\subsubsection{Irrational tilings that are not \prim{}}\label{sectNotIntrinsicallyIrrational}\vspace{-9pt}

\begin{figure}[t]
\begin{center}
\begin{tikzpicture}[scale=0.9]
\draw[thick,fill=blue!10]   (0,0)  rectangle (1,1);
\draw[thick,fill=blue!30]   (1,0)  rectangle (2,1);
\draw[thick,fill=blue!30]   (2,0)  rectangle (3,1);
\draw[thick,fill=blue!10]   (3,0)  rectangle (4,1);
\draw[thick,fill=blue!30]   (4,0)  rectangle (5,1);
\draw[thick,fill=blue!10]   (5,0)  rectangle (6,1);
\draw[thick,fill=yellow!75] (6,0)      rectangle (7.414,1);
\draw[thick,fill=yellow!75] (7.414,0)  rectangle (8.828,1);
\draw[thick,fill=yellow!25] (8.828,0)  rectangle (10.243,1);
\draw[thick,fill=yellow!75] (10.243,0) rectangle (11.657,1);
\end{tikzpicture}
\end{center}
\vspace{-12pt}
\caption{A tiling in $\WWW^{(\sqrt2)}$}\label{figTilingW}
\end{figure}

Let $\WWW=\WWW^{(\beta)}$ consist of tilings of a strip of width one with four types of rectangular tile:
two different colours of $1\times1$ square tiles, and
two different colours of $\beta\times1$ tiles, with the restriction that all the square tiles must occur to the left of the tiles of length $\beta$.
Suppose $\beta>1$ and that $\beta$ is irrational.
See Figure~\ref{figTilingW} for an example.
These tilings have the \igf{},
\[
f_\WWW(z) \eq
\frac1{(1-2z)(1-2z^\beta)} ,
\]
with radius of convergence~$\frac12$.
However, $\WWW$ is not \prim{}, because its \dgf{},
\[
F_\WWW(s) \eq
\frac1{(1-2e^{-s})(1-2e^{-\beta s})} ,
\]
has evenly spaced poles at $\log2+2\pi i\+\bbZ$. 

This class is simple enough to analyse explicitly.
Indeed, we have
\[
|\WWW_{\leqs x}| \eq
\sum_{\ell=0}^{\floor{x/\beta}} 2^\ell \sum_{k=0}^{\floor{x-\ell\beta}} 2^k,
\]
where $\ell$ is the number of tiles of length $\beta$, and $k$ is the number of square tiles.

\begin{figure}[t]
  \centering
  \includegraphics[width=2.7in]{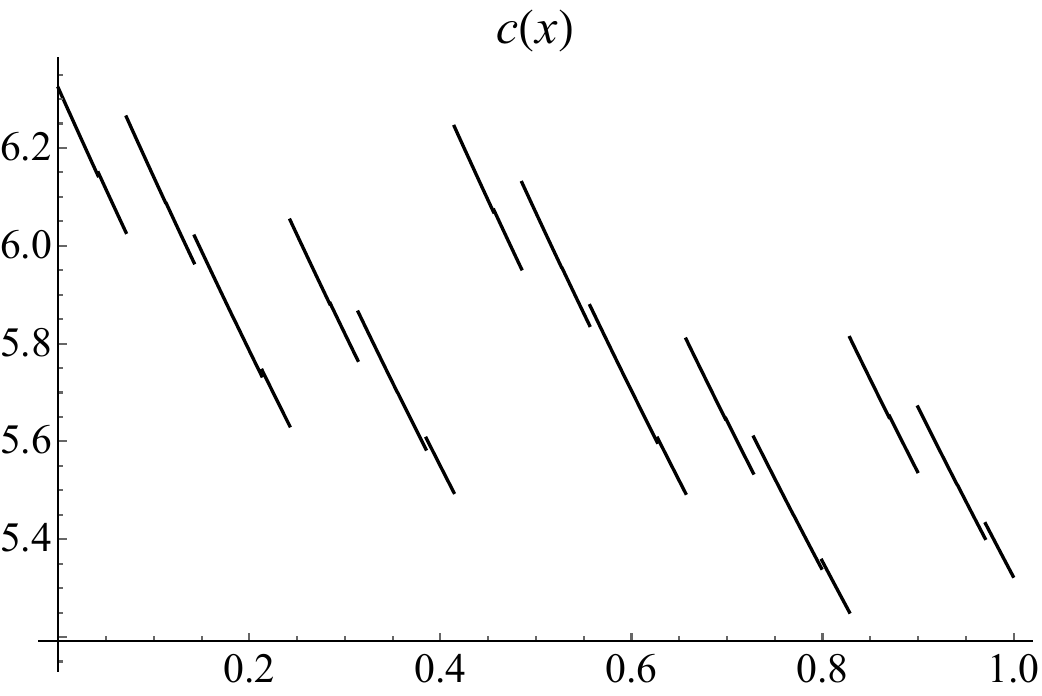}
  \caption{$\Big|\WWW^{(\sqrt2)}_{\leqs x}\Big| \ssim c(x)\+2^x$}\label{figOscillation}
\end{figure}

Suppose, for each $\ell\geqs 0$, we let $\delta_\ell(x)=(x-\ell\beta)\bmod 1$.
That is, $\delta_\ell(x)$ is the fractional part of $x-\ell\beta$.
Then,
after some simplification,
it can be seen that asymptotically,
\[
|\WWW_{\leqs x}| \ssim
c(x)2^x,
\qquad
\text{where~~}
c(x)\eq
\sum_{\ell=0}^{\infty} 2^{1-(\beta-1)\ell-\delta_\ell(x)}
\]
is a bounded function that oscillates with period one.
See Figure~\ref{figOscillation} for the plot of one period of $c(x)$ when $\beta=\sqrt2$.
Note that $c(x)$ is, in fact,
not continuous on any interval, since 
each
term in the sum contributes a discontinuity at $\ell\beta\bmod 1$, 
and these are dense on the unit interval since $\beta$ is irrational.

In light of this behaviour, and also that of \emph{rational} classes (see Section~\ref{sectRational}),
we make the following 
conjecture concerning those classes whose \dgf{} has evenly spaced dominant singularities.
\begin{conjO}\label{conj}
  Suppose that $\CCC$ is a combinatorial class
  whose generating function $f_\CCC(z)$ has radius of convergence~$\rho<1$, and
  \[
  f_\CCC(z) \eq g(z) \splus \frac{h(z)}{(1-z/\rho)^\alpha} ,
  \]
  where $\alpha\notin-\bbN$, and both $g$ and $h$ are analytic on the cut disk $\overline{D}(0,\rho) \setminus \bbR^{\leqs0}$.

  If the only singularities on the line of convergence of its Dirichlet generating function, $F_\CCC(s)$, are evenly spaced at
  $\log(1/\rho)+2\pi i\+\bbZ/\omega$,
  for some constant $\omega$,
  then
  the asymptotic number of objects in $\CCC$ of size at most $x$ is given by
  \[
  |\CCC_{\leqs x}|
  \;\sim\;
  c(x) \+ \rho^{-x} \+ x^{\alpha-1} ,
  \]
  where $c(x)$ is a bounded function that oscillates with period~$\omega$.
\end{conjO}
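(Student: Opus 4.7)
The plan is to mimic the proof of Theorem~\ref{thmMain}, but to collect contributions from \emph{all} the singularities on the line of convergence, rather than from just the principal one. Starting from the Mellin--Perron representation
\[
  |\CCC_{\leqs x}| \eq \frac1{2\pi i}\int_{\sigma-i\infty}^{\sigma+i\infty} F_\CCC(s)\+\frac{e^{sx}}{s}\+ds \qquad (\sigma>\mu\eq\log(1/\rho)),
\]
I would push the contour leftwards through the line of convergence. By hypothesis the only singularities crossed are the evenly spaced points $s_k\eq\mu+2\pi i k/\omega$ for $k\in\bbZ$. At $s_0$, the local expansion $F_\CCC(s)\ssim h(\rho)\+\big(\log(1/\rho)\big)^{-\alpha}(s-\mu)^{-\alpha}$ inherited from $f_\CCC(z)=g(z)+h(z)(1-z/\rho)^{-\alpha}$ is exactly the one used in Theorem~\ref{thmMain}; for each remaining $s_k$, one would need to check that the singularity is likewise algebraic of order~$\alpha$ and to extract its leading coefficient $A_k$. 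A Hankel-contour estimate at each $s_k$, identical in shape to the one used in Theorem~\ref{thmMain}, then contributes $A_k\+\rho^{-x}\+x^{\alpha-1}e^{2\pi i k x/\omega}/\Gamma(\alpha)$ to the asymptotics, and summing (at least formally) yields
\[
  |\CCC_{\leqs x}| \;\sim\; \rho^{-x}\+x^{\alpha-1}\+c(x), \qquad c(x)\eq\frac1{\Gamma(\alpha)}\sum_{k\in\bbZ}A_k\+e^{2\pi i k x/\omega}.
\]
The right-hand sum is manifestly $\omega$-periodic in~$x$, matching the shape of the conjectured answer.

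The main obstacle, and the reason this appears as a conjecture rather than a theorem, is twofold. First, the local decomposition $g+h/(1-z/\rho)^\alpha$ controls $F_\CCC$ only within the fundamental strip $|\Im s|\leqs\pi$, where $F_\CCC(s)=f_\CCC(e^{-s})$; determining each coefficient $A_k$ for $k\neq0$, and even verifying that the singularity at $s_k$ is indeed algebraic of order~$\alpha$, requires global information about how $F_\CCC$ extends from its Ribenboim coefficients outside this strip. Second, the Fourier series defining $c(x)$ is typically not absolutely convergent---the explicit closed form of $c$ for $\WWW^{(\sqrt2)}$ exhibited in Section~\ref{sectNotIntrinsicallyIrrational} is nowhere continuous---so the contour-shift argument must supply uniform estimates on $F_\CCC(s)\+e^{sx}/s$ along vertical lines in $\chplane{\mu-\delta}$ strong enough to justify interchanging summation over $k$ with integration, and then upgrade what will naturally be only a distributional or Abel-summable identity to the pointwise boundedness claim. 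Imposing supplementary hypotheses on the structure of $F_\CCC$ near each $s_k$---for instance, analogues of Propositions~\ref{propLinearCond} or~\ref{propNonlinearCond}---seems the most promising route to such control, and would simultaneously furnish $c(x)$ with an explicit description in terms of fractional parts of the sizes governing the dominant singularity.
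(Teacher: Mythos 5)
The statement you are proving is posed in the paper as a \emph{conjecture}: the authors offer no proof of it (their only evidence is the explicit elementary computation of $c(x)$ for the class $\WWW^{(\beta)}$ in Section~\ref{sectNotIntrinsicallyIrrational} and the analogous behaviour of rational and shifted-rational classes in Section~\ref{sectRational}). So there is no proof in the paper to compare against, and the question is only whether your argument closes the conjecture. It does not, and to your credit you say so yourself: what you have written is a program, not a proof, and the obstacles you flag are exactly the substantive ones. Two of them deserve emphasis. First, the contour-shift step requires $F_\CCC$ to continue analytically (or at least meromorphically, with controlled growth on vertical lines) into some half-plane $\chplane{\mu-\delta}$ punctured at the points $s_k$; the hypotheses of the conjecture say nothing about continuation past the line of convergence, nor about the \emph{nature} of the singularities at $s_k$ for $k\neq0$ --- they need not be algebraic of order $\alpha$, and the decomposition $g+h/(1-z/\rho)^\alpha$ only controls $F_\CCC$ in the strip $|\Im s|\leqs\pi$, as you note. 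Second, even granting local expansions at every $s_k$, summing infinitely many Hankel contributions and obtaining a \emph{bounded} $c(x)$ requires uniform control that the $\WWW^{(\sqrt2)}$ example shows cannot come from absolute convergence of the Fourier series. Note also that the route the paper actually uses for Theorem~\ref{thmMain} --- Kable's Wiener--Ikehara theorem (Proposition~\ref{propTauberian}) applied to the Laplace transform of the summatory function --- is unavailable here precisely because that Tauberian theorem demands continuity of $K$ on the whole closed line of convergence, i.e.\ a unique singularity there; so a genuinely different mechanism (your Perron-type contour shift, or a Tauberian theorem with periodic boundary singularities) is indeed what would be needed.

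In short: your proposal is a sensible and well-diagnosed plan of attack, consistent with how one would expect the conjecture eventually to be proved, but it leaves the conjecture open, as the paper does. If you want to make partial progress, the tractable special case is the one the paper implicitly has in mind: assume additionally that near each $s_k$ one has $F_\CCC(s)=G_k(s)+H_k(s)/(s-s_k)^\alpha$ with $G_k,H_k$ analytic and with $\sum_k\sup|H_k|<\infty$ together with polynomial growth of $F_\CCC$ on vertical lines; under such hypotheses your argument can be completed, but those hypotheses are not part of the conjecture as stated.
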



However, it \emph{is} possible for a class of irrational tilings that are not primitive to have smooth aperiodic asymptotics; that is, for $c(x)$ to be asymptotically constant.
For example, suppose $\QQQ$ consists of strip tilings with two distinct tiles of length $\log 2$ and three distinct tiles of length $\log 3$, in which all the $\log 2$ tiles must be to the left of any $\log 3$ tiles.
Then $\QQQ$ has \igf{}
\[
f_\QQQ(z) \eq
\frac1{(1-2\+z^{\log2})(1-3\+z^{\log3})} ,
\]
which has radius of convergence~$e^{-1}$.
This class of tilings is not \prim{} since its \dgf{},
\[
F_\QQQ(s) \eq
\frac1{(1-2\+e^{-s\log2})(1-3\+e^{-s\log3})} ,
\]
has a pole of order two at $1+2\pi i$ for each $i\in\bbZ$.

Now, 
\[
f_\QQQ(z) \eq \sum_{a\geqs0}\+ \sum_{b\geqs0} 2^a\+ 3^b\+ z^{a\log 2 + b\log 3} 
\eq \sum_{n\in S_3} n\+ z^{\log n} ,
\] 
where $S_3$ is the set of 3-smooth numbers, 
a positive integer being \emph{$k$-smooth} if none of its prime factors exceeds~$k$, 
so the 3-smooth numbers are those of the form $2^a3^b$ for~$a,b\geqs0$.

Thus, $|\QQQ_{\leqs x}|$ gives the sum of the 3-smooth numbers no greater than~$e^x$.
Hence (see~\cite{Tenenbaum2015} for example), despite $\QQQ$ not being primitive, we have
\[
|\QQQ_{\leqs x}| \ssim
\frac{x\+e^x}{(\log2)(\log3)} ,
\]
which
matches the asymptotics that would be given by Theorem~\ref{thmMain} if $\QQQ$ were in fact primitive.


\vspace{-9pt}\subsubsection{Floor tiling}\label{sectFloorTiling}\vspace{-9pt}

Consider the process of tiling a rectangular floor using rectangular tiles of dimension $\beta\times\gamma$, where \mbox{$\beta/\gamma\notin\bbQ$}.
Tiles may be laid in either orientation, and
tiling starts in the southwest corner, partial tiles being permitted (only) along the east and north edges of the floor.
See Figure~\ref{figSquareTiling} for an example using tiles whose aspect ratio is the golden ratio~$\vphi$.

\begin{figure}[t]
  \centering
  \includegraphics[width=1.75in]{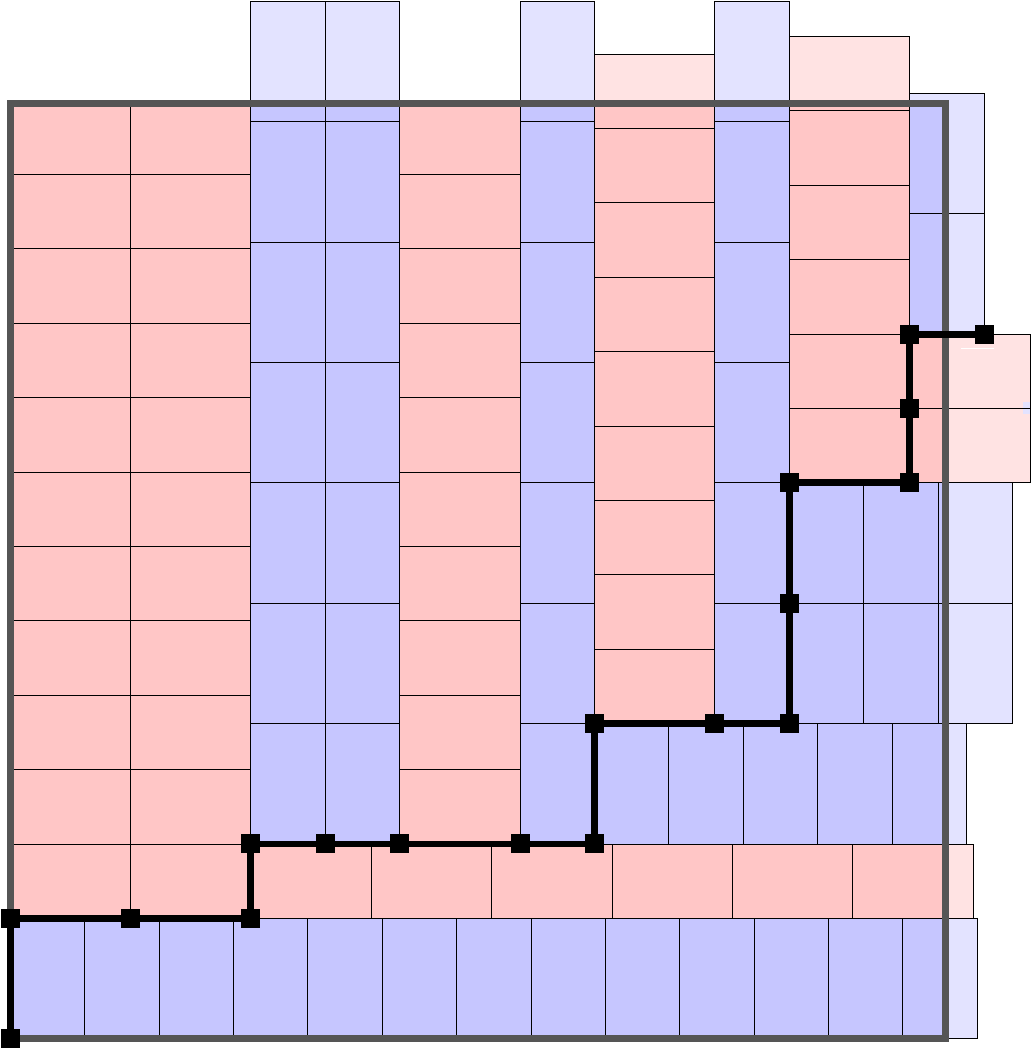}
  \caption{A tiling of a square with sides of length $4\pi$ by rectangular tiles of dimension $1\times\vphi$}\label{figSquareTiling}
\end{figure}

\newcommand{\uub}{\mathbf{u}_\beta}
\newcommand{\uug}{\mathbf{u}_\gamma}
\newcommand{\vvb}{\mathbf{v}_\beta}
\newcommand{\vvg}{\mathbf{v}_\gamma}

Because the aspect ratio of the tiles is irrational,
when tiling from the southwest,
in order to be able to complete the tiling,
at each step a ``ray'' of identically oriented tiles needs to be laid either northwards or eastwards.
Each ray can be identified by a line segment across its end, and these form a walk with step set
$
S \eq
\{
\uub=(\beta,0),\, \uug=(\gamma,0),\, \vvb=(0,\beta),\, \vvg=(0,\gamma)
\}
$.
A tiling is completed once the walk reaches either the east or the north edge of the rectangular floor.
See Figure~\ref{figSquareTiling} for an illustration.

Note, however, that the sub-walks $\uub\vvg$ and $\vvg\uub$ correspond to the same union of two rays, and this is also the case for $\uug\vvb$ and $\vvb\uug$.
Thus tilings are in bijection with walks with step set $S$ that avoid consecutive pairs of steps $\vvg\uub$ and $\vvb\uug$ (that is, right turns with steps of different lengths).
For example, in Figure~\ref{figSquareTiling} the two steps forming each right turn have the same length.

Thus, the bivariate \igf{} for these walks satisfies the functional equation
\[
f(u,v) \eq
1\:+\:f(u,v) \Big((1-v^\gamma)u^\beta \:+\: (1-v^\beta)u^\gamma\:+\: v^\beta\:+\: v^\gamma \Big),
\]
where $u$ marks the distance east, and $v$ marks the distance north, from the southwest corner. 
Hence we have
\[
f(u,v) \eq \frac1{1 - u^\beta - u^\gamma - v^\beta - v^\gamma + u^\beta v^\gamma + u^\gamma v^\beta} .
\]
To determine the asymptotic number of tilings of a square floor, we need asymptotics for $\big[u^{\leqs x}\big]\big[v^{\leqs x}\big]f(u,v)$, and probably also for $\big[u^{\leqs x}\big]\big[v^{\leqs y}\big]f(u,v)$, at least when $y$ is (very) close to~$x$.
This seems to require techniques considerably
beyond the scope of this work, so we leave it as an open problem.
Handling rectangular floors with aspect ratio not equal to one is likely to require a generalisation to the irrational case of the approach to ``off-diagonal'' asymptotics established in
\emph{Analytic Combinatorics in Several Variables}~\cite{PW2013} (see also~\cite{Melczer2021,Mishna2020}).
\begin{questO}
  (a)~Asymptotically, how many tilings are there of an $x\times x$ square using rectangular tiles of dimension $\beta\times\gamma$, when $\beta/\gamma\notin\bbQ$?
  (b)~Asymptotically, how many such tilings are there of an $x\times rx$ rectangle, for any fixed $r$?
\end{questO}
For~(a), on the basis of numerical data and anticipated concentration behaviour, we would expect the answer to be $C_{\beta,\gamma}\+\rho^{-2x}/\sqrt{x}$, for some constant $C_{\beta,\gamma}$ to be determined, where $\rho$ is the radius of convergence of~$f(z,z)$.

\subsection{Lattice walks and trees}\label{sectQuadratic}


In this section, we explore some structures whose \igf{}s have dominant square root singularities.
We begin with irrational Motzkin paths, and Dyck paths with an infinite step set,
before analysing two classes of plane trees with irrational vertex sizes.

\begin{figure}[ht]
  \centering
  \begin{tikzpicture}[scale=0.4]
    \setplotptradius{0.125}
    \draw[thin,gray!50] (1,1)--(19,1);
    \plotpermnobox{}{1,1,2,1,2,3,2,3,3,4,3,2,2,2,1,1,2,2,1}
    \draw[thick] (1,1)--(2,1)--(3,2)--(4,1)--(6,3)--(7,2)--(8,3)--(9,3)--(10,4)--(12,2)--(14,2)--(15,1)--(16,1)--(17,2)--(18,2)--(19,1);
  \end{tikzpicture}
  \caption{A Motzkin path in $\MMM^{(\sqrt2)}$ of Euclidean length $6+12\sqrt2$}\label{figMotzkin}
\end{figure}

\vspace{-9pt}\subsubsection{Motzkin paths}\vspace{-9pt}

Given some irrational $\beta>0$, let $\MMM=\MMM^{(\beta)}$ be the class of Motzkin paths in which up-steps and down-steps have size $\beta$, and horizontal steps have size $1$.
Graphically, the size of a step could be its width, or its height, or it could be its Euclidean length.
For example, if $\beta=\sqrt2$, then the size of a path in $\MMM$ equals its Euclidean length when drawn in the standard way; see Figure~\ref{figMotzkin} for an example.
By the first passage decomposition, the \igf{} for $\MMM$ satisfies the functional equation
\[
f_\MMM(z)\eq 1 \splus z\+ f_\MMM(z) \splus z^{2\beta}\+ f_\MMM(z)^2.
\]
This yields
\[
f_\MMM(z)\eq \frac{1-z-\sqrt{(1-z)^2-4\+z^{2\beta}}}{2\+z^{2\beta}} ,
\]
which has a dominant square root singularity.
Given that the discriminant is the difference of two squares, this singularity is
at the unique positive root $\rho$ of $1-z-2\+z^\beta$ (the other factor being positive for $0\leqs z\leqs1$).
Thus, every singularity on the line of convergence of the \dgf{} is a root of $1-e^{-s}-2\+e^{-\beta s}$.
So, by Proposition~\ref{propLinearCond}, $\MMM$ is \prim{}.

Now,
\[
f_\MMM(z) \eq \frac{1-z}{2\+z^{2\beta}} \:-\: \frac{h(z)}{(1-z/\rho)^{-1/2}}, \quad \text{where~~} h(z) \eq 
\frac1{2\+z^{2\beta}}\+{\sqrt{\dfrac{(1-z)^2-4\+z^{2\beta}}{1-z/\rho}}} ,
\]
and so has the required form. 
Applying Theorem~\ref{thmMain} then gives
\begin{align*}
|\MMM_{\leqs x}|
&\ssim
-\frac1{2\+\rho^{2\beta}\+\log(1/\rho)\+\Gamma(-\tfrac12)} \+\sqrt{\lim_{z\to\rho^-}\frac{(1-z)^2-4\+z^{2\beta}}{1-z/\rho}} \+\rho^{-x} \+x^{-3/2} \\[6pt]
& \eq
\frac{\sqrt2}{\log(1/\rho)} \+\sqrt{\frac{\rho + \beta\+(1-\rho)}{\pi\+(1-\rho)^3}} \+\rho^{-x} \+x^{-3/2}
,
\end{align*}
where we use the fact that $2\rho^\beta=1-\rho$.

For $\beta=\sqrt2$, we have the approximate asymptotics $\big|\MMM^{(\sqrt2)}_{\leqs x}\big|\simeq2.29313\cdot2.39330^xx^{-3/2}$.



%
%

\begin{figure}[ht]
  \centering
  \begin{tikzpicture}[scale=0.4]
    \setplotptradius{0.125}
    \draw[thin,gray!75] (1,1)--(19,1);
    \plotpermnobox{}{1,3,4,3,6,3,1,5,1,3,1,2,4,5,4,2,3,2,1}
    \draw[thick] (1,1)--(2,3)--(3,4)--(4,3)--(5,6)--(6,3)--(7,1)--(8,5)--(9,1)--(10,3)--(11,1)--(12,2)--(13,4)--(14,5)--(15,4)--(16,2)--(17,3)--(19,1);
  \end{tikzpicture}
  \caption{A tall-Dyck path}
  \label{figTallDyck}
\end{figure}

\vspace{-9pt}\subsubsection{Dyck paths with infinite step sets}\vspace{-9pt}

Enumeration by Euclidean length enables us to handle infinite step sets.
In a (normal) Dyck path, each up-step is associated, in either the first passage decomposition or the arch decomposition, with a matching down-step, being the next down-step to the right at the same level (see~\cite[pages~76--77]{FS2009}, for example). 
The classes we consider in this section require that a matched pair of up and down steps have the same height.

As our first example, a \mbox{\emph{tall-Dyck}} path consists of a sequence of steps drawn from the set $\{(1,\pm k) : k\geqs1\}$, which ends at the level of its start point, never going below this level, and in which each $(1,k)$ up-step is {matched} by a $(1,-k)$ down-step.
That is, a tall-Dyck path is formed from a classical Dyck path by replacing each matched pair of up and down steps by a $(1,k)$ step and a $(1,-k)$ step for some~$k$.
See Figure~\ref{figTallDyck} for an example.

Let $\DDD$ be the class of tall-Dyck paths, and let $f_\DDD(z,w)$ be its bivariate \igf{} enumerating by Euclidean length in which $w$ marks width.
Note that there are infinitely many tall-Dyck paths of any given even width.
Then, by the first passage decomposition, $f^\DDD(z,w)$ satisfies
\[
f^\DDD(z,w)
\eq
1 \splus
w^2 \sum_{k\geqs1} z^{2\sqrt{1+k^2}} \+f^\DDD(z,w)^2 .
\]
Hence,
\[
f^\DDD(z,w)
\eq
\frac{1-\sqrt{1-4\+ w^2 s(z)}}{2\+ w^2 s(z)} ,
\]
where $s(z)=\sum_{k\geqs1} z^{2\sqrt{1+k^2}}$.

The univariate \igf{} $f^\DDD(z,1)$ has a dominant square root singularity,
$\rho\approx0.529999$,
at the unique positive root of $1-4\+s(z)$, so by Proposition~\ref{propLinearCond}, $\DDD$ is \prim{}.
Applying Theorem~\ref{thmMain} then gives
\[
|\DDD_{\leqs x}|
\ssim
\frac{2\+\sqrt{\rho\+s'(\rho)}}{\sqrt{\pi}\log(1/\rho)}\+\rho^{-x} \+x^{-3/2}
\sapprox
1.69800\cdot1.88680^xx^{-3/2} .
\]
The asymptotic expected width of a tall-Dyck path of Euclidean length at most $x$ is
\[
\frac{\big[z^{\leqs x}\big]{f^\DDD}_{\!\!w}(z,1)}{\big[z^{\leqs x}\big]f^\DDD(z,1)}
\ssim
\frac{x}{2\+\rho\+s'(\rho)}
\sapprox
0.547797 x
.
\]

\begin{figure}[ht]
  \centering
  \begin{tikzpicture}[scale=0.4]
    \setplotptradius{0.125}
    \draw[thin,gray!75] (1,1)--(26,1);
    \foreach \x in {1,...,26} {
      \draw[thin,gray!75] (\x,1)--(\x,1.25);
    }
    \draw[thin,gray!75] (1,1)--(1,6);
    \foreach \x in {1,...,6} {
      \draw[thin,gray!75] (1,\x)--(1.25,\x);
    }
    \plotpermnobox{}{1,3,0,0,0,1,2,0,0,5,2,0,3,0,5,3,6,0,3,0,0,2,4,2,0,1}
    \draw[thick] (1,1)--(2,3)--(6,1)--(7,2)--(10,5)--(11,2)--(13,3)--(15,5)--(16,3)--(17,6)--(19,3)--(22,2)--(23,4)--(24,2)--(26,1);
  \end{tikzpicture}
  \caption{A Dyck path with step set $\bbZ^{>0}\times(\bbZ\!\setminus\!\{0\})$}\label{figDyck2}
\end{figure}

This approach
is valid for variants of
Dyck paths with any step set $S\subset\bbR^{>0}\times(\bbR\!\setminus\!\{0\})$ with an irrational set of lengths,
in which each $(h, k)$ up-step in a path is matched by some $(j, -k)$ down-step of the same height.
See Figure~\ref{figDyck2} for an example.
The analysis follows \emph{mutatis mutandis},
by setting
\[
s(z)
\eq
\sum_{k\geqs1} \bigg(\sum_{(h,k)\in S}z^{\sqrt{h^2+k^2}}\bigg) \bigg(\sum_{(j,-k)\in S}z^{\sqrt{j^2+k^2}}\bigg)
.
\]
For example, the asymptotic number of Dyck paths
with step set $\bbZ^{>0}\times(\bbZ\!\setminus\!\{0\})$
of Euclidean length at most $x$ is approximately $1.22713\cdot2.67151^xx^{-3/2}$.

The enumeration of lattice paths in which matched up and down steps do not need to have the same height seems to be rather more complicated.



\vspace{-9pt}\subsubsection{Trees with leaves of irrational size}\vspace{-9pt}

Given some irrational $\beta>0$, let $\AAA=\AAA^{(\beta)}$ be the class of plane trees in which non-leaf vertices have size $1$ and leaves have size $\beta$.
See the left of Figure~\ref{figTrees} for an example.
The \igf{} for $\AAA$ satisfies the functional equation
\[
f_{\!\AAA}(z) \eq
z^\beta + \frac{z\+ f_{\!\AAA}(z)}{1-f_{\!\AAA}(z)} ,
\]
since a tree in $\AAA$ is either a single leaf of size~$\beta$ or else a nonempty sequence of subtrees (each from~$\AAA$) rooted by a vertex of size~$1$.
So,
\[
f_{\!\AAA}(z) \eq
\frac12\! \left(1-z+z^\beta-\sqrt{\left(1-z+z^\beta\right)^{\!2} - 4\+z^\beta}\right) \!.
\]
The discriminant factorises as
\[
\big(1-z^{\beta /2}-\sqrt{z}\big) \big(1-z^{\beta /2}+\sqrt{z}\big) \big(1+z^{\beta /2}-\sqrt{z}\big) \big(1+z^{\beta /2}+\sqrt{z}\big) \!,
\]
with the first factor being smaller than the other factors and decreasing from $1$ for $z\geqs0$.
Thus, $f_{\!\AAA}(z)$ has a dominant square root singularity $\rho$ at the unique positive root of $1-z^{\beta /2}-\sqrt{z}$.
So, by Proposition~\ref{propLinearCond}, $\AAA$ is \prim{}, and Theorem~\ref{thmMain} gives
\[
|\AAA_{\leqs x}|
\ssim
\frac{\sqrt{\rho + \sqrt{\rho} \big(\beta  (1-\sqrt\rho)^2-\rho \big)}}{2 \sqrt{\pi} \log(1/\rho)}
\+\rho^{-x}
\+x^{-3/2}
.
\]




\begin{figure}[t]
  \centering
  \begin{tikzpicture}[scale=0.6]
    \draw[thick,gray!70!black] (2,4)--(2,3)--(3,2)--(4,1)--(6,2)--(5,3)--(5,4);
    \draw[thick,gray!70!black] (1,2)--(4,1)--(4,2);
    \draw[thick,gray!70!black] (6,3)--(6,2)--(8,3)--(9,4);
    \draw[thick,gray!70!black] (3,2)--(3,3);
    \draw[thick,gray!70!black] (8,3)--(7,4);
    \draw[thick,gray!70!black] (8,3)--(8,4);
    \setplotptradius{0.125}
    \plotpermnobox{}{0,3,2,1,3,2,0,3}
    \setplotptradius{0.15}
    \plotpermnobox[blue!50!black]{}{2,4,3,2,4,3,4,4,4}
  \end{tikzpicture}
  \qquad\qquad\qquad
  \begin{tikzpicture}[scale=0.6]
    \draw[thick,gray!70!black] (1,4)--(2,3)--(2,2)--(6,1)--(9,2)--(11,3)--(12,4);
    \draw[thick,gray!70!black] (4,4)--(5,3)--(6,2)--(7,3)--(7,4);
    \draw[thick,gray!70!black] (8,3)--(9,2)--(9,4);
    \draw[thick,gray!70!black] (2,3)--(2,4);
    \draw[thick,gray!70!black] (2,2)--(3,3);
    \draw[thick,gray!70!black] (5,3)--(5,4);
    \draw[thick,gray!70!black] (6,1)--(6,3);
    \draw[thick,gray!70!black] (9,2)--(10,3);
    \draw[thick,gray!70!black] (11,3)--(11,4);
    \setplotptradius{0.1489}
    \plotpermnobox[red]{}{0,2,3,4,4,2,3,0,3,0,4,4}
    \setplotptradius{0.1326}
    \plotpermnobox[blue!50!black]{}{0,3,0,0,3,1,4,3,2,0,3}
    \setplotptradius{0.1197}
    \plotpermnobox[green!50!black]{}{4,4,0,0,0,3,0,0,4,3}
  \end{tikzpicture}
  \caption{At the left, a plane tree in class $\AAA^{(\beta)}$ with leaves of size $\beta$, and, at the right, a plane tree in class~$\BBB^{\alpha,\beta,\gamma}$ with three sizes of leaf and two sizes of non-leaf vertex}
  \label{figTrees}
\end{figure}

\vspace{-9pt}\subsubsection{Trees with three sizes of leaf}\vspace{-9pt}

Given $\alpha,\beta,\gamma>0$, let $\BBB=\BBB^{\alpha,\beta,\gamma}$ be the class of plane trees
in which
each leaf has size $\alpha$, $\beta$ or~$\gamma$,
and
each non-leaf vertex has size $\alpha$ or~$\beta$.
See the right of Figure~\ref{figTrees} for an example.
The \igf{} for $\BBB$ satisfies the functional equation
\[
f_\BBB(z) \eq
z^\alpha + z^\beta + z^\gamma + \frac{(z^\alpha + z^\beta)\+ f_\BBB(z)}{1-f_\BBB(z)} .
\]
So,
\[
f_\BBB(z) \eq
\frac12\! \left(1+z^\gamma-\sqrt{\left(1-z^\gamma\right)^2 - 4\+z^\alpha  - 4\+ z^\beta}\right) \!
,
\]
with dominant square root singularity $\rho$ at the least positive root of $\left(1-z^\gamma\right)^2 - 4\+z^\alpha  - 4\+ z^\beta$.
Proposition~\ref{propLinearCond} does not apply, but if the set $\{\alpha, \beta, \gamma\}$ is irrational
then $\BBB$ is \prim{} by Proposition~\ref{propNonlinearCond}, and we can use Theorem~\ref{thmMain} to determine the asymptotics.

For example, if $\alpha=\log16\approx2.77$, $\beta=\log9\approx2.20$ and $\gamma=\log6\approx1.79$, then $\rho=e^{-1}$ and we have
\[
|\BBB_{\leqs x}|
\ssim
\frac1{12}
\sqrt{\frac{23 \log2+21 \log3}{2 \pi }}
\+e^x
\+x^{-3/2}
.
\]

\subsection{Rational and shifted-rational classes}\label{sectRational}


Recall from the Introduction that we say that a combinatorial class is rational if there exists some $\omega>0$ (not necessarily rational) such that the size of each object in the class is a multiple of~$\omega$.
Its \emph{period} is the largest such~$\omega$.
That is, $\omega$ is the greatest common divisor of the object sizes, being the largest value that divides into each size an integral number of times.
For example, the class of strip tilings with tiles of length $\frac\pi{15}$, $\frac\pi6$ and $\frac\pi4$ is rational with period~$\frac\pi{60}$.

We say that a class is \emph{shifted-rational} if there exist $\omega,\delta>0$ with $\delta/\omega\notin\bbQ$ such that the size of each object in the class equals $n\omega+\delta$ for some $n\in\bbN$.
Given a shifted-rational class with set of object sizes~$\Lambda$,
its {period} is the largest $\omega$ such that
there exists a positive \emph{offset} $\delta<\omega$ such that $\Lambda\subseteq\omega\bbN+\delta$.
A rational class is considered to have offset zero.
Note that we consider a shifted-rational class to be an irrational class. 
Among other things, this simplifies the statement of Proposition~\ref{propSupercritical}.
Note that
if $\GGG$ is a shifted-rational class and $\FFF\eq\seq{\GGG}$ is supercritical, then $\FFF$ is \prim{}.

Suppose $f(z)$ is the \igf{} of a rational or shifted-rational class with period $\omega$ and offset $\delta$. Then $f(z)=z^\delta j\big(z^\omega\big)$, where $j(z)$ is an ordinary power series.
Suppose that $f(z)$ has positive radius of convergence~$\rho<1$, and one can write
  \[
  f(z) \eq g(z) \splus \frac{h(z)}{(1-z/\rho)^\alpha} ,
  \]
  where $\alpha\notin-\bbN$, and both $g$ and $h$ are analytic on the cut disk $\overline{D}(0,\rho) \setminus \bbR^{\leqs0}$.
Then $j(z)$ has a unique dominant singularity at $\rho^\omega$, and
\[
j(z) \eq
z^{-\delta/\omega}
\left(
g(z^{1/\omega})+\frac{h(z^{1/\omega})}{(1-z^{1/\omega}/\rho)^\alpha}
\right)
\!.
\]
Using the classical result~\eqref{eqClassical}, and the fact that
\[
\lim_{z\to\rho^\omega} \, \frac{1-z/\rho^\omega}{1-z^{1/\omega}/\rho} \;=\; \omega ,
\]
we can extract the asymptotics for the coefficients of $f(z)$:
\[
\big[z^{n\omega+\delta}\big]f(z)
\eq
\big[z^n\big]j(z)
\ssim
\frac{\rho^{-\delta}\+ \omega^\alpha\+ h(\rho)}{\Gamma(\alpha)}\+\rho^{-n\omega}\+n^{\alpha-1}
\ssim
\frac{\omega\+ h(\rho)}{\Gamma(\alpha)}\+\rho^{-n\omega-\delta}\+(n\omega+\delta)^{\alpha-1} .
\]
For an ordinary power series $j(z)$ and integer $n$, the cumulative sum of the coefficients satisfies the identity
\[
\big[z^{\leqs n}\big]j(z)
\eq
\big[z^n\big]\frac{j(z)}{1-z} .
\]
Thus, the asymptotic number of objects of size at most $x$ in a rational or shifted-rational class with \igf{} $f(z)$ is given by 
\[
\big[z^{\leqslant x}\big]f(z)
\ssim
\frac{\rho^{-\delta}\+ \omega\+ h(\rho)}{(1-\rho^\omega)\+\Gamma(\alpha)}\+\rho^{-\floor{(x-\delta)/\omega}\omega-\delta}\+x^{\alpha-1} .
\]
Further terms in the asymptotics of shifted-rational classes can be established by using the result of Flajolet and Odlyzko (see equation~\eqref{eqFlajoletOdlyzko} on page~\pageref{eqFlajoletOdlyzko}).

Note that, if we let the period tend to zero (conceptually tending towards being an irrational class), then
we have $\lim_{\omega\to0}(1-\rho^{\omega})/\omega = -\log\rho$, in line with what we would expect from Theorem~\ref{thmMain}.




Plane trees with edges of irrational size provide a simple example of a shifted-rational class.
Given irrational $\gamma>0$, let $\EEE=\EEE^{(\gamma)}$ be the class of plane trees in which each vertex has size~$1$ and each edge has size~$\gamma$.
Since the number of edges in a tree is one less than the number of vertices,
the set of tree sizes in $\EEE$ is $\{1+(1+\gamma)n:n\in\bbN\}$.
The \igf{} for $\EEE$
is
\[
f_{\EEE}(z) \eq
\frac1{2\+z^\gamma}\! \left(1-\sqrt{1-4\+z^{1+\gamma}}\right) 
\eq
z + z^{2+\gamma}+2 z^{3+2\gamma}+5 z^{4+3\gamma}+14 z^{5+4\gamma} 
+\ldots
\]
Thus,
\[
\big|\EEE_{1+(1+\gamma)n}\big| \eq C_n \eq \frac1{n+1}\binom{2n}{n} \ssim \frac1{\sqrt{\pi}}\+4^n n^{-3/2}
,
\]
where $C_n$ is the $n$th Catalan number.

Suppose $\FFF$ consists of forests of trees with irrational edges, so $\FFF\eq\seq{\EEE}$.
Then the \igf{} for $\FFF$
is
\[
f_{\FFF}(z) \eq
\frac1{1-f_\EEE(z)} \eq
\frac{1-2\+ z^\gamma+\sqrt{1-4\+ z^{1+\gamma}}}{2 (1-z-z^\gamma)} .
\]
For all irrational $\gamma$, the simple pole dominates the square root singularity, so the class is supercritical, and hence \prim{}.


\subsection{Phase transitions}\label{sectPhaseTransitions}

Consider a family $\{\CCC^\bb:\bb\in \Omega\}$ of combinatorial classes, in which each class depends on a vector $\bb\in\bbR^d$ of parameter values, drawn from the set $\Omega\subset\bbR^d$ that consists of those vectors $\bb$ for which $\CCC^\bb$ is \prim{}.
Suppose that for each $\bb\in \Omega$, the asymptotic number of objects in $\CCC^\bb$ is given by
\[
\big|\CCC^\bb_{\leqs x}{}\big|
\ssim
c_\bb \cdot \rho_\bb^{-x} \cdot x^{\nu_\bb} .
\]
Typically, we find that the exponential growth rate $\rho_\bb^{-1}$ varies continuously for parameter values in $\Omega$:
\[
\lim_{\bg\underset{\Omega}{\to}\bb} \rho_\bg \eq \rho_\bb .
\]
Moreover,
$\Omega$ can be partitioned into several regions of continuity $\Omega=\biguplus_{i=1}^k R_i$ corresponding to different phases, such that within each region
the degree $\nu_\bb$ of the polynomial factor (also known as the \emph{critical exponent}) remains constant,
while
the constant $c_\bb$ term varies continuously.
That is, for each $i$ and any $\bb,\bg\in R_i$, we have
$\nu_\bb=\nu_\bg$, and
\[
\lim_{\bg\underset{R_i}{\to}\bb} c_\bg \eq c_\bb .
\]
Phase transitions occur in the asymptotics on the boundaries of the regions, where the value of $\nu_\bb$ changes abruptly.
We illustrate this phenomenon with three examples.

\vspace{-9pt}\subsubsection{Strip tilings}\vspace{-9pt}

\begin{figure}[t]
\begin{center}
\begin{tikzpicture}[scale=0.9]
\draw[thick,fill=white]   (0.000,0)   rectangle (1.000,1);
\draw[thick,fill=blue!30] (1.000,0)   rectangle (1.408,1);
\draw[thick,fill=blue!30] (1.408,0)   rectangle (1.816,1);
\draw[thick,fill=blue!30] (1.816,0)   rectangle (2.224,1);
\draw[thick,fill=white]   (2.224,0)   rectangle (3.224,1);
\draw[thick,fill=blue!30] (3.224,0)   rectangle (3.632,1);
\draw[thick,fill=white]   (3.632,0)   rectangle (4.632,1);
\draw[thick,fill=white]   (4.632,0)   rectangle (5.632,1);
\draw[thick,fill=blue!30] (5.632,0)   rectangle (6.040,1);
\draw[thick,fill=white]   (6.040,0)   rectangle (7.040,1);
\draw[thick,fill=green!90!black]  (7.040,0)   rectangle (7.747,1);
\draw[thick,fill=green!90!black]  (7.747,0)   rectangle (8.454,1);
\draw[thick,fill=white]   (8.454,0)   rectangle (9.454,1);
\draw[thick,fill=green!90!black]  (9.454,0)   rectangle (10.161,1);
\end{tikzpicture}
\end{center}
\vspace{-12pt}
\caption{A tiling in $\YYY^{(\sqrt{1/6},\sqrt{1/2})}$}\label{figTilingY}
\end{figure}

Given $\beta,\gamma>0$, with $\min(\beta,\gamma)$ irrational,
let $\YYY=\YYY^{(\beta,\gamma)}$ consist of tilings of a strip with white tiles of length~$1$,
blue tiles of length~$\beta$, and
green tiles of length~$\gamma$, subject to the requirement that every green tile occurs to the right of any blue tiles.
See Figure~\ref{figTilingY} for an illustration.
The \igf{} for $\YYY$ is
\[
f_\YYY(z) \eq \frac1{1-z-z^\beta} \left( 1+\frac{z^\gamma}{1-z-z^\gamma} \right) \eq \frac{1-z}{(1-z-z^\beta)(1-z-z^\gamma)} ,
\]
with poles, $\rho_\beta$ and $\rho_\gamma$, the positive roots of $1-z-z^\beta$ and $1-z-z^\gamma$, respectively,
with $\rho_\beta<\rho_\gamma$ if $\beta<\gamma$.
Since $\min(\beta,\gamma)$ is irrational, $\YYY$ is \prim{}.

This family of tilings has three phases, with the asymptotics exhibiting a discontinuity across the ray $\beta=\gamma$,
on which there is a linear subexponential term as a result of the double pole.
Either side of this ray, the subexponential term is constant.
\[
\big|\YYY_{\leqs x}\big|
\ssim
\begin{cases}
\dfrac{1-\rho_\beta}
{(1-\rho_\beta-\rho_\beta^\gamma)\+H(\rho_\beta)}\+\rho_\beta^{-x},
& \text{if $\beta<\gamma$}, \\[17pt]
\dfrac{(1-\rho)\+\log(1/\rho)}{H(\rho)^2} \+ \rho^{-x} \+ x,
& \text{if $\beta=\gamma$, where $\rho=\rho_\beta=\rho_\gamma$}, \\[14pt]
\dfrac{1-\rho_\gamma}
{(1-\rho_\gamma-\rho_\gamma^\beta)\+H(\rho_\gamma)}\+\rho_\gamma^{-x},
& \text{if $\beta>\gamma$},
\end{cases}
\]
where $H(x)=-x\log x-(1-x)\log(1-x)$. 


\vspace{-9pt}\subsubsection{Forests of rooted binary trees}\vspace{-9pt}

\begin{figure}[t]
  \centering
  \begin{tikzpicture}[scale=0.5]
    \draw[thin,gray!75] (0,1)--(16,1);
    \draw[thick,gray!70!black] (2,1)--(2,2)--(1,3);
    \draw[thick,gray!70!black] (2,2)--(3,3);
    \draw[thick,gray!70!black] (4,1)--(4,2);
    \draw[thick,gray!70!black] (7,1)--(7,2)--(6,3)--(5,4);
    \draw[thick,gray!70!black] (7,2)--(8,3);
    \draw[thick,gray!70!black] (6,3)--(7,4);
    \draw[thick,gray!70!black] (10,1)--(10,2)--(11,3)--(10,4)--(9,5);
    \draw[thick,gray!70!black] (10,2)--(9,3);
    \draw[thick,gray!70!black] (11,3)--(12,4);
    \draw[thick,gray!70!black] (10,4)--(11,5);
    \draw[thick,gray!70!black] (14,1)--(14,2)--(13,3);
    \draw[thick,gray!70!black] (14,2)--(15,3);
    \setplotptradius{0.125}
    \plotpermnobox{}{3,2,3,1,0,3,2,3,5,4,3,4,0,2}
    \plotpermnobox{}{0,0,0,0,0,0,0,0,0,2,0,0,0,1}
    \setplotptradius{0.14}
    \plotpermnobox[green!50!black]{}{0,1,0,0,0,0,1,0,0,1}
    \setplotptradius{0.15}
    \plotpermnobox[blue!50!black]{}{0,0,0,2,4,0,4,0,3,0,5,0,3,0,3}
  \end{tikzpicture}
  \caption{A forest of rooted binary trees in $\FFF^{(\beta,\gamma)}$}
  \label{figForest}
\end{figure}

Given $\beta,\gamma>0$, consider the following class of plane trees.
Each tree has a \emph{root} of degree one, of size either $1$ or~$\gamma$.
Each other vertex of degree one is a \emph{leaf} and has size either $1$ or~$\beta$.
Every tree has a root and at least one leaf.
Any further (internal) vertices all have degree three and size~$1$.
Let $\FFF=\FFF^{(\beta,\gamma)}$ be the class of forests (that is, sequences) of these trees.
See Figure~\ref{figForest} for an example.

The \igf{} for these forests is
\[
f_\FFF(z)
\eq
\frac{2\+z}{z-z^\gamma+(z+z^\gamma) \sqrt{1-4\+z(z+z^\beta)}} .
\]
The behaviour of the asymptotics and the structure of a typical forest depend primarily on the value of $\gamma$.
The class $\FFF^{(\beta,\gamma)}$~is \prim{} if either $\gamma<1$ and $\gamma$ is irrational or else $\gamma\geqs1$ and $\beta$ is irrational.
Let $\rho_\beta$ be the positive root of $1-4z^2-4z^{1+\beta}$, and, if $\gamma<1$ (so $z^\gamma>z$ for small $z$), let $\rho_\gamma$ be the positive root of
$z^\gamma -z-(z+z^\gamma)\sqrt{1-4\+z(z+z^\beta)}$.
Then, the asymptotics of this family of forests exhibit the following three phases, with three distinct critical exponents: 
\[
\big|\FFF_{\leqs x}\big|
\ssim
\begin{cases}
c_1 \+ \rho_\gamma^{-x} ,
 & \text{if $\gamma < 1$}, \\[3pt]
c_2 \+ \rho_\beta^{-x} \+ x^{-1/2} ,
 & \text{if $\gamma = 1$}, \\[3pt]
c_3 \+ \rho_\beta^{-x} \+ x^{-3/2} ,
 & \text{if $\gamma > 1$},
\end{cases}
\]
for constants $c_1$, $c_2$ and $c_3$, which depend only on $\beta$ and $\gamma$ (for $c_1$ and $c_3$).

The structure of a typical forest also depends on the phase.
If $\gamma<1$, then, asymptotically, a forest contains lots of small trees whose average size is finite.
In contrast, if $\gamma>1$, then a typical forest contains a finite number of large trees.
On the critical line, in a forest of size at most $x$, the number of trees and the average size of a tree both grow like~$\sqrt{x}$.
Formally, if $T(\fff)$ is the number of trees in a forest~$\fff$, then we have
\[
\expecn{\leqs x}{T}
\ssim
\begin{cases}
t_1 x
, & \text{if $\gamma < 1$}, \\[3pt]
t_2\sqrt{x}
, & \text{if $\gamma = 1$}, \\[3pt]
t_3
, & \text{if $\gamma > 1$},
\end{cases}
\]
where $t_1$, $t_2$ and $t_3$ depend only on $\beta$ and~$\gamma$.


\vspace{-9pt}\subsubsection{Grounded Dyck paths}\vspace{-9pt}

We conclude by taking a brief look at a simple example that has slightly more complex critical behaviour.
Given $\beta,\gamma>0$, a \emph{grounded Dyck path} consists of a nonempty Dyck path in which each step has width either $1$ or~$\gamma$.
This path is both preceded and followed by a (possibly empty) sequence of horizontal steps, each of width either $1$ or~$\beta$.
The size of a grounded Dyck path is its horizontal extent, the sum of the widths of its steps.
Such a path can be considered to represent a range of mountains rising from a plain.
See Figure~\ref{figPhasePlane} for two examples.

The class $\GGG=\GGG^{(\beta,\gamma)}$ of grounded Dyck paths has \igf{}
\[
f_\GGG(z)
\eq
\frac{1-2(z+z^\gamma)^2-\sqrt{1-4(z+z^\gamma)^2}}{2(z+z^\gamma)^2\+(1-z-z^\beta)^2} .
\]
Let $\rho_\beta$ be the positive root of $1-z-z^\beta$, and $\rho_\gamma$ be the positive root of $1-2z-2z^\gamma$.
The class $\GGG^{(\beta,\gamma)}$ is \prim{} if 
$\rho_\beta\leqs\rho_\gamma$ and $\beta$ is irrational or $\rho_\gamma\leqs\rho_\beta$ and $\gamma$ is irrational.
The asymptotics of this family of paths exhibit a discontinuity across the critical curve given by the parametric equation
\[
(\beta,\gamma) \eq \big( \log_\rho\!\+(1-\rho) ,\, \log_\rho\!\+(\tfrac12-\rho) \big) ,
\]
for $0<\rho<\frac12$.

\begin{figure}[t]
\centering
\hspace{12pt}
\raisebox{72pt}{$\rho_\beta\leqs\rho_\gamma$}
\hspace{-60pt}
\raisebox{96pt}{\begin{tikzpicture}[scale=0.2]
\setplotptradius{0.175}
\plotpt{2.6309}{0}
\plotpt{4.2618}{0}
\plotpt{5.2618}{0}
\plotpt{6.2618}{0}
\plotpt{7.8927}{0}
\plotpt{8.8927}{0}
\plotpt{10.5236}{0}
\draw[thin,gray!75] (11.5236,0)--(16.1903,0);
\plotpt{11.5236}{0}
\plotpt{13.1903}{1}
\plotpt{14.1903}{0}
\plotpt{15.1903}{1}
\plotpt{16.1903}{0}
\plotpt{17.8212}{0}
\plotpt{19.4521}{0}
\plotpt{20.4521}{0}
\plotpt{22.083}{0}
\plotpt{23.083}{0}
\plotpt{24.083}{0}
\draw[thick] 
(2.6309,0)--(4.2618,0)--(5.2618,0)--(6.2618,0)--(7.8927,0)--(8.8927,0)--(10.5236,0)--(11.5236,0)--(13.1903,1)--(14.1903,0)--(15.1903,1)--(16.1903,0)--(17.8212,0)--(19.4521,0)--(20.4521,0)--(22.083,0)--(23.083,0)--(24.083,0);
\end{tikzpicture}}
\hspace{-144pt}
\includegraphics[width=2.7in]{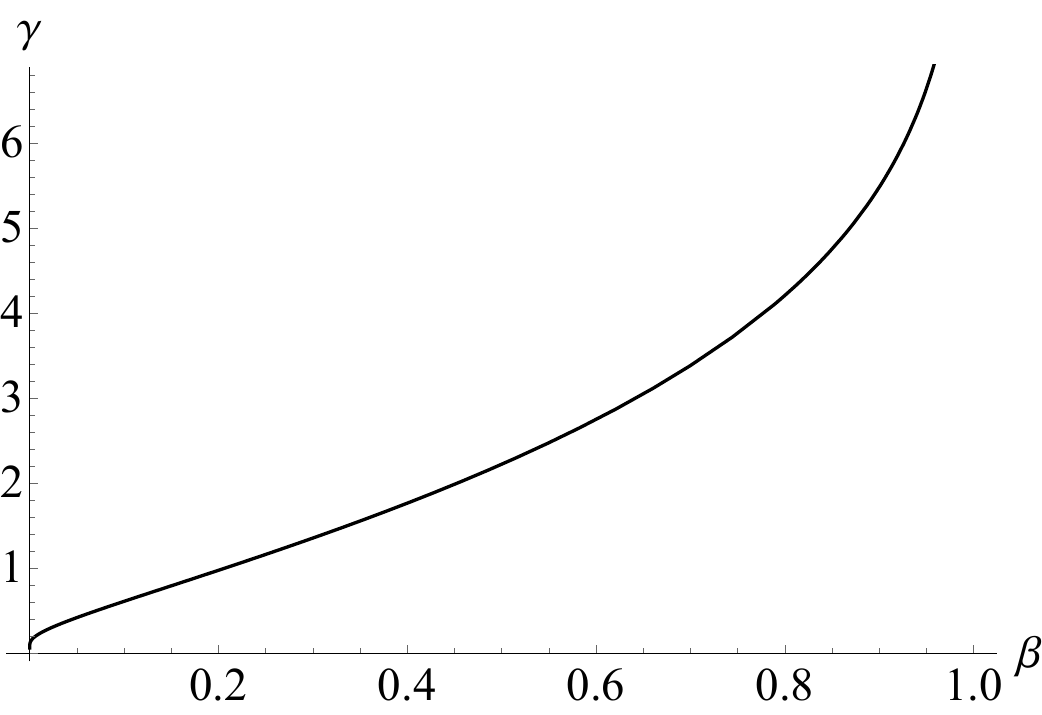}
\hspace{-126pt}
\raisebox{24pt}{\begin{tikzpicture}[scale=0.2]
\setplotptradius{0.175}
\draw[thin,gray!75] (1.,0)--(19.2983,0);
\plotpt{1.}{0}
\plotpt{1.75}{0}
\plotpt{2.75}{0}
\plotpt{3.75}{1}
\plotpt{4.75}{0}
\plotpt{5.1191}{1}
\plotpt{6.1191}{2}
\plotpt{6.4882}{1}
\plotpt{6.8573}{2}
\plotpt{7.2264}{3}
\plotpt{8.2264}{2}
\plotpt{9.2264}{1}
\plotpt{9.5955}{0}
\plotpt{10.5955}{1}
\plotpt{10.9646}{0}
\plotpt{11.3337}{1}
\plotpt{12.3337}{2}
\plotpt{13.3337}{3}
\plotpt{13.7028}{2}
\plotpt{14.0719}{1}
\plotpt{14.441}{2}
\plotpt{14.8101}{1}
\plotpt{15.8101}{0}
\plotpt{16.1792}{1}
\plotpt{17.1792}{2}
\plotpt{18.1792}{1}
\plotpt{18.5483}{0}
\plotpt{19.2983}{0}
\draw[thick] (1.,0)--(1.75,0)--(2.75,0)
--(3.75,1)--(4.75,0)
--(5.1191,1)--(6.1191,2)--(6.4882,1)--(6.8573,2)--(7.2264,3)--(8.2264,2)--(9.2264,1)--(9.5955,0)
--(10.5955,1)--(10.9646,0)
--(11.3337,1)--(12.3337,2)--(13.3337,3)--(13.7028,2)--(14.0719,1)--(14.441,2)--(14.8101,1)--(15.8101,0)
--(16.1792,1)--(17.1792,2)--(18.1792,1)--(18.5483,0)--(19.2983,0);
\end{tikzpicture}}
\hspace{-45pt}
\raisebox{60pt}{$\rho_\gamma<\rho_\beta$}
  \caption{The phases of grounded Dyck paths $\GGG^{(\beta,\gamma)}$}\label{figPhasePlane}
\end{figure}

Above this curve and on it, the dominant singularity of the \igf{} is a double pole and the critical exponent equals~$1$; below the curve, the \igf{} has a dominant square root singularity and the critical exponent equals~$-\frac32$. Thus,
\[
\big|\GGG_{\leqs x}\big|
\ssim
\begin{cases}
c_1 
\+ \rho_\beta^{-x} \+ x ,
 & \text{if $\rho_\beta \leqs \rho_\gamma$}, \\[3pt] 
c_2 
 \+ \rho_\gamma^{-x} \+ x^{-3/2} ,
 & \text{if $\rho_\gamma < \rho_\beta$},
\end{cases}
\]
where $c_1$ and $c_2$ depend only on $\beta$ and $\gamma$.


The structure of a typical path also depends on the relationship between $\rho_\beta$ and $\rho_\gamma$.
Given a path $\ppp\in\GGG^{(\beta,\gamma)}$, let $U(\ppp)$ be the number of up-steps (equal to the number of down-steps) in~$\ppp$,
and let $L(\ppp)$ be the number of level steps in~$\ppp$.
Asymptotically, there is a trichotomy:
\[
\begin{array}{lll}
  \expecn{\leqs x}{U} \ssim u_1,          &\;\; \expecn{\leqs x}{L} \ssim \ell_1 x, &\;\; \text{if $\rho_\beta < \rho_\gamma$}, \\[6pt]
  \expecn{\leqs x}{U} \ssim u_2 \sqrt{x}, &\;\; \expecn{\leqs x}{L} \ssim \ell_2 x, &\;\; \text{if $\rho_\beta = \rho_\gamma$}, \\[6pt]
  \expecn{\leqs x}{U} \ssim u_3 x,        &\;\; \expecn{\leqs x}{L} \ssim \ell_3,   &\;\; \text{if $\rho_\beta > \rho_\gamma$}, 
\end{array}
\]
for constants $u_i$ and $\ell_i$ that depend on $\beta$ and $\gamma$.

If $\rho_\beta < \rho_\gamma$, then the expected number of up-steps is constant.
However, on the critical curve, the expected number of up-steps in a path of width at most $x$ grows like $\sqrt{x}$.
In both of these cases, we typically see a large plain containing a small mountain range.
In contrast, if $\rho_\beta > \rho_\gamma$, then the expected number of level steps is constant, and
we typically see a large mountain range rising from a small plain.
See Figure~\ref{figPhasePlane} for an illustration.


\section{Dirichlet series and Ribenboim series}\label{sectTheory}

A generalised \emph{Dirichlet series} is a series of the form
\[
F(s) \eq \sum_{i\geqs1} a_i \, e^{-\lambda_i s} ,
\]
where $(\lambda_i)_{i\geqs1}$ is a nonnegative strictly increasing sequence tending to infinity.

The primary reference for standard results on Dirichlet series is Hardy and Riesz~\cite{HR1915}. For a more modern presentation, see McCarthy~\cite{McCarthy2018}.


The region of convergence for a Dirichlet series is a right half-plane:
\begin{propO}[{\cite[Theorem~3]{HR1915}}]\label{propDirichletConvergence}
  For any Dirichlet series $F(s)$, there exists a unique value $\mu\in[0,\infty]$,
  known as its \emph{abscissa of convergence},
  such that
  $F(s)$ is convergent for all $s$ such that $\re s>\mu$, and
  $F(s)$ is not convergent for any $s$ such that $\re s<\mu$.
  The line $\re s=\mu$ is known as \emph{the line of convergence} for $F(s)$.
\end{propO}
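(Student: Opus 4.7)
The plan is to reduce the proposition to the standard half-plane property of Dirichlet series via a single Abel-summation lemma, and then to define $\mu$ as an infimum over the set of real parts where convergence occurs.

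\emph{Step 1 (key lemma).} First I would prove: if $F(s_0)$ converges at some $s_0\in\bbC$, then $F(s)$ converges for every $s$ with $\Re s>\Re s_0$. Setting $b_i=a_ie^{-\lambda_i s_0}$, the partial sums $B_N=\sum_{i\leqs N}b_i$ are bounded (they form a convergent sequence), and with $\tau=s-s_0$ (so $\Re\tau>0$), Abel summation gives
\[
\sum_{i=1}^{N} a_i e^{-\lambda_i s}\eq \sum_{i=1}^{N-1} B_i\big(e^{-\lambda_i \tau}-e^{-\lambda_{i+1}\tau}\big)\splus B_N e^{-\lambda_N\tau}.
\]
The boundary term tends to $0$ since $|B_N|$ is bounded and $|e^{-\lambda_N\tau}|=e^{-\lambda_N\Re\tau}\to0$. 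For the main sum, writing $e^{-\lambda_i\tau}-e^{-\lambda_{i+1}\tau}=\tau\int_{\lambda_i}^{\lambda_{i+1}}e^{-t\tau}\,dt$ yields the estimate
\[
\big|e^{-\lambda_i\tau}-e^{-\lambda_{i+1}\tau}\big|\;\leqs\;\frac{|\tau|}{\Re\tau}\big(e^{-\lambda_i\Re\tau}-e^{-\lambda_{i+1}\Re\tau}\big),
\]
so the series $\sum_i B_i(e^{-\lambda_i\tau}-e^{-\lambda_{i+1}\tau})$ is dominated by a telescoping series and converges absolutely. Hence $F(s)$ converges.

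\emph{Step 2 (definition and verification).} Let $S=\{\Re s_0:F(s_0)\text{ converges}\}\subseteq\bbR$ and set $\mu=\inf S$, with the conventions $\mu=\infty$ if $S=\varnothing$ and $\mu=0$ if $\inf S<0$ (the combinatorial setting of the paper, where the $a_i$ and $\lambda_i$ are nonnegative, ensures $S\subseteq\bbR^{\geqs0}$, giving $\mu\in[0,\infty]$). Now for any $s$ with $\Re s>\mu$, pick $s_0$ with $\mu\leqs\Re s_0<\Re s$ at which $F$ converges; by Step~1, $F(s)$ converges. Conversely, if $F$ converged at some $s$ with $\Re s<\mu$, then $\Re s\in S$, contradicting the definition of $\mu$. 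Uniqueness of $\mu$ is immediate from these two properties.

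\emph{Anticipated difficulty.} The only nontrivial step is the Abel-summation lemma, specifically the bound on $|e^{-\lambda_i\tau}-e^{-\lambda_{i+1}\tau}|$ for complex $\tau$, which requires passing from a naive triangle-inequality estimate to the sharper integral form above so that the series telescopes rather than merely being dominated term-by-term. Once this estimate is in hand, the remainder of the argument is formal, and the proposition can be invoked in the same spirit as Abel's theorem for power series.
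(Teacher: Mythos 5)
Your proof is correct and is precisely the classical Abel-summation argument behind Theorem~3 of Hardy and Riesz, which is all the paper offers for this proposition (it gives no proof, only the citation). One caveat concerns the statement rather than your argument: for a general Dirichlet series with rapidly decaying nonnegative coefficients one can have $\inf S=-\infty$, so no $\mu\in[0,\infty]$ makes the divergence half of the claim literally true; your truncation convention quietly papers over this, and the claim that nonnegativity of the $a_i$ and $\lambda_i$ forces $S\subseteq\bbR^{\geqs0}$ is not quite right --- what saves the day in the paper's setting is that the coefficients of a \dgf{} are positive integers, so the terms do not tend to zero when $\Re s<0$.
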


A Dirichlet series $F(s)=\sum_{\lambda\in\Lambda} a_\lambda e^{-\lambda s}$ is analytic within its half-plane of convergence, since it has a well-defined derivative, also a Dirichlet series: $F'(s)=\sum_{\lambda\in\Lambda} -a_\lambda \lambda e^{-\lambda s}$.

Moreover, if a Dirichlet series has nonnegative coefficients, then it is singular at the real point on its line of convergence:
\begin{propO}[{\cite[Theorem~10]{HR1915}}]\label{propDirichletSingularity}
  If $F(s)$ is a Dirichlet series with nonnegative coefficients and a finite abscissa of convergence~$\mu$, then $\mu$ is a singularity of~$F$.
\end{propO}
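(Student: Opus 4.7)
The plan is to adapt the classical Pringsheim--Vivanti argument for power series to the Dirichlet setting. I argue by contradiction: suppose $\mu$ is a regular point of $F$. Then $F$ admits an analytic continuation to some open neighbourhood of $\mu$, and combining this with analyticity on the half-plane $\Re s > \mu$ (which holds by Proposition~\ref{propDirichletConvergence}), there exists some $\veps > 0$ such that $F$ is analytic on the open set $\{s : \Re s > \mu\} \cup D(\mu,\veps)$. Choosing the real point $s_0 = \mu + \veps/3$, the disk $D(s_0,2\veps/3)$ is contained in this analytic region, and its radius $2\veps/3$ strictly exceeds $s_0 - \mu = \veps/3$. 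Consequently the Taylor expansion of $F$ about $s_0$ converges on this disk.

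The key observation is that, because $s_0$ lies strictly inside the half-plane of convergence, $F$ can be differentiated term-by-term at~$s_0$, giving
\[
F^{(k)}(s_0) \eq (-1)^k \sum_{i \geqs 1} a_i \+ \lambda_i^k \+ e^{-\lambda_i s_0} .
\]
Setting $b_k := (-1)^k F^{(k)}(s_0)$, the nonnegativity of each $a_i$ ensures that each $b_k$ is itself a nonnegative convergent sum. I then choose $r$ with $s_0 - \mu < r < 2\veps/3$, so that the real point $s_0 - r$ lies inside the disk of convergence of the Taylor expansion, yet satisfies $s_0 - r < \mu$. Evaluating the Taylor series at $s = s_0 - r$ yields
\[
F(s_0 - r) \eq \sum_{k \geqs 0} \frac{F^{(k)}(s_0)}{k!} (-r)^k \eq \sum_{k \geqs 0} \frac{r^k}{k!} \+ b_k .
\]

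Every term in the resulting double sum is nonnegative, so Tonelli's theorem permits an interchange of summation, giving
\[
F(s_0 - r) \eq \sum_{i \geqs 1} a_i \+ e^{-\lambda_i s_0} \sum_{k \geqs 0} \frac{(r \lambda_i)^k}{k!} \eq \sum_{i \geqs 1} a_i \+ e^{-\lambda_i (s_0 - r)} .
\]
Thus the original Dirichlet series converges at a point whose real part $s_0 - r$ is strictly less than $\mu$, contradicting Proposition~\ref{propDirichletConvergence}. The main obstacle is really just the careful geometric choice of $s_0$ and $r$ so that the Taylor expansion reaches strictly beyond the line of convergence while remaining inside the domain of analyticity; the interchange of summation is unusually clean here precisely because all quantities in sight are nonnegative, so Tonelli applies with no further hypothesis.
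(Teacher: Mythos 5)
Your argument is correct: it is the classical Pringsheim--Landau proof of this result, and it is essentially the same argument that underlies the paper's citation (the paper offers no proof of its own, deferring to Hardy and Riesz, Theorem~10, whose proof proceeds exactly this way). The geometric choices of $s_0$ and $r$ are sound, the term-by-term differentiation at $s_0$ is justified by local uniform convergence inside the half-plane of convergence (as the paper itself notes just before the proposition), and the Tonelli interchange correctly converts finiteness of the Taylor series at $s_0-r$ into convergence of the Dirichlet series at a point with real part below $\mu$, yielding the desired contradiction.
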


These properties of Dirichlet series are sufficient to establish convergence and analyticity in Ribenboim series.

\begin{propO}\label{propRibenboimConvergence}
    If $f(z)$ is a Ribenboim series, then there exists some $\rho\in[0,\infty]$ such that $f(z)$ is convergent for all $z\in\bbC$ such that $|z|<\rho$, and is not convergent when $|z|>\rho$. Moreover, the function $f$ is analytic in the cut disk $D(0,\rho)\setminus\mathbb{R}^{\leqs0}$.
    If $f(z)$ has nonnegative coefficients and $\rho$ is finite, then $f$ has a singularity at $z=\rho$.
\end{propO}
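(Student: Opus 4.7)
The plan is to lift the corresponding Dirichlet-series results through the exponential transform $z \leftrightarrow s = -\log z$. Let $f(z) = \sum_{\lambda \in \Lambda} c_\lambda z^\lambda$, and let $F(s) = \sum_{\lambda \in \Lambda} c_\lambda e^{-\lambda s}$ be its exponential transform. By Proposition~\ref{propDirichletConvergence}, $F$ has a unique abscissa of convergence $\mu \in [0,\infty]$. I set $\rho := e^{-\mu}$ and claim this is the required radius.

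For the convergence dichotomy, fix the principal branch of logarithm on $\bbC \setminus \bbR^{\leqs0}$, so that $z^\lambda := e^{\lambda \log z}$ satisfies $|z^\lambda| = |z|^\lambda$ and the term-by-term identity $c_\lambda z^\lambda = c_\lambda e^{-\lambda s}$ holds at $s = -\log z$, where $\Re s = \log(1/|z|)$. Then $|z| < \rho$ forces $\Re s > \mu$, so $F(s)$ converges by Proposition~\ref{propDirichletConvergence} and so does $f(z)$; conversely $|z| > \rho$ gives $\Re s < \mu$ and divergence. On the cut $\bbR^{<0}$ I use the same branch convention (e.g.\ $\arg z = \pi$), for which $|z^\lambda| = |z|^\lambda$ still holds and the argument carries over verbatim.

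For analyticity, the principal-branch map $\psi(z) := -\log z$ is a biholomorphism from the cut disk $D(0,\rho) \setminus \bbR^{\leqs0}$ onto the open half-strip $\{s : \Re s > \mu,\; -\pi < \Im s < \pi\}$, which sits inside the half-plane of convergence of $F$. On that half-plane $F$ is analytic, via the standard fact that a Dirichlet series converges uniformly on compacta inside its half-plane of convergence and admits the termwise derivative $-\sum_\lambda c_\lambda \lambda\, e^{-\lambda s}$. Since $f = F \circ \psi$ on the cut disk, analyticity of $f$ follows by composition.

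For the singularity claim, suppose the $c_\lambda$ are nonnegative and $\rho$ is finite, so $\mu$ is finite. Proposition~\ref{propDirichletSingularity} supplies a singularity of $F$ at $s = \mu$. Since $\psi$ restricts to a biholomorphism between punctured neighbourhoods of $\rho$ (in the cut disk) and of $\mu$ (in the open strip), an analytic continuation of $f$ past $\rho$ would, via $\psi^{-1}$, furnish one of $F$ past $\mu$, a contradiction. The main delicacy throughout is the treatment of the branch cut: one must apply Proposition~\ref{propDirichletConvergence} only on the open half-plane and match it to the cut disk via the principal branch, so that no spurious claims are smuggled in along $\bbR^{\leqs0}$.
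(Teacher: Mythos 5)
Your proposal is correct and follows essentially the same route as the paper: apply the exponential transform, invoke Propositions~\ref{propDirichletConvergence} and~\ref{propDirichletSingularity} for the Dirichlet series $F$, and transfer convergence, analyticity (via composition with the biholomorphism onto the half-strip) and the singularity at $\rho$ back to $f$. You simply spell out the branch-cut bookkeeping that the paper leaves implicit.
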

\begin{proof}
    The exponential transform $F$ of $f$ is a Dirichlet series, satisfying
    $F(s)=f(e^{-s})$ when $\im s\in(-\pi,\pi]$ and $f(z)=F(\log(1/z))$ if $z\neq0$.
    The result then follows from Propositions~\ref{propDirichletConvergence} and~\ref{propDirichletSingularity}
    and the fact that the composition of two analytic functions is analytic.
\end{proof}

Given a Dirichlet series $\sum_{\lambda\in\Lambda} a_\lambda e^{-\lambda s}$, its
\emph{summatory function} is 
the function $A(x)=\sum_{\lambda\leqs x} a_\lambda$.
A Dirichlet series can be recovered from the Laplace transform of its summatory function as follows:
\begin{obsO}\label{obsLaplaceTranform}
\[
    F(s)
    \eq \sum_{\lambda\in\Lambda} a_\lambda e^{-\lambda s}
    \eq \sum_{\lambda\in\Lambda} a_\lambda\+ s\int_\lambda ^\infty e^{-sx} \+dx
    \eq s\int_0^\infty A(x)\+e^{-sx} \+dx ,
\]
where $A(x)=\sum_{\lambda\leqs x} a_\lambda$.
\end{obsO}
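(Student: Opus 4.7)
The plan is to prove the three equalities in turn. The first is simply the definition of the Dirichlet series $F(s)$, so nothing needs to be done there. For the second equality, I would compute the integral directly: for any $s$ with $\Re s > 0$,
\[
s\int_\lambda^\infty e^{-sx}\,dx \eq \bigl[-e^{-sx}\bigr]_\lambda^\infty \eq e^{-\lambda s},
\]
which, when multiplied by $a_\lambda$ and summed over $\lambda \in \Lambda$, recovers $F(s)$ termwise. This identity is valid in the half-plane of convergence given by Proposition~\ref{propDirichletConvergence}.

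For the third equality, the key observation is that $\int_\lambda^\infty e^{-sx}\,dx = \int_0^\infty \mathbf{1}_{[\lambda,\infty)}(x)\+e^{-sx}\,dx$, so that
\[
\sum_{\lambda\in\Lambda} a_\lambda \int_\lambda^\infty e^{-sx}\,dx \eq \sum_{\lambda\in\Lambda} a_\lambda \int_0^\infty \mathbf{1}_{[\lambda,\infty)}(x)\+ e^{-sx}\,dx .
\]
I would then interchange the sum and integral, noting that $\sum_{\lambda\in\Lambda} a_\lambda\+\mathbf{1}_{[\lambda,\infty)}(x) = \sum_{\lambda \leqs x} a_\lambda = A(x)$ for every fixed $x \geqs 0$ by definition of the summatory function. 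This yields the stated integral representation.

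The main (and only) obstacle is justifying the interchange of summation and integration. This is a standard application of Fubini's theorem and is valid for $s$ in the half-plane of \emph{absolute} convergence, where $\sum_\lambda |a_\lambda|\+ e^{-\lambda \Re s} < \infty$ ensures the iterated integral $\int_0^\infty \sum_\lambda |a_\lambda|\+\mathbf{1}_{[\lambda,\infty)}(x)\+ e^{-x\Re s}\,dx$ is finite. Since the Dirichlet series and its absolute version share a common right half-plane of convergence (shifted by at most a finite amount), the identity then extends by analytic continuation to the full half-plane $\Re s > \mu$.
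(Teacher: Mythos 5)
Your proposal is correct and matches the paper's (implicit) argument: the observation is stated with the chain of equalities serving as its own proof, namely evaluating $s\int_\lambda^\infty e^{-sx}\,dx=e^{-\lambda s}$ and then swapping sum and integral so that the indicator functions assemble into $A(x)$. Your extra care about absolute convergence and analytic continuation is harmless but unnecessary here, since the paper only invokes the observation for Dirichlet series with nonnegative coefficients, where Tonelli's theorem justifies the interchange on the entire half-plane of convergence.
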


To prove our main theorem, we combine this observation with an appropriate Wiener--Ikehara Tauberian theorem.
The following variant is due to Anthony Kable.

\begin{propO}[{Kable~\cite[Theorem~1]{Kable2008}}]\label{propTauberian}
   Let $j:\bbR\to[0,\infty)$ be a non-decreasing function whose Laplace transform
   \[
   J(s) \eq \int_0^\infty j(x)\+e^{-sx}\+dx
   \]
   exists for all $s$ such that $\re s>1$.
   Given $L\in\bbR$ and $\alpha\geqs1$, let
   \[
   K(s) \eq J(s) \sminus \frac{L}{(s-1)^\alpha} .
   \]
   If $K$ extends continuously to the closed half-plane $\chplane1$, then
   \[
   j(x) \ssim \frac{L}{\Gamma(\alpha)}\+e^{x}\+x^{\alpha-1} .
   \]
\end{propO}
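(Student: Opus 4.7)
The plan is to prove this as a Wiener--Ikehara-type Tauberian theorem via Fourier analysis. First I would translate the singularity to the origin: setting $\widetilde J(s) \defeq J(s+1)$ and $p(x) \defeq L\+x^{\alpha-1}/\Gamma(\alpha)$, a direct computation gives $\int_0^\infty p(x)\+e^{-sx}\+dx = L/s^\alpha$ on $\ohplane 0$. The hypothesis on $K$ is then equivalent to the statement that
\[
\widetilde K(s) \eq \int_0^\infty \big(e^{-x} j(x) - p(x)\big)\+e^{-sx}\+dx
\]
extends continuously to the closed right half-plane $\chplane 0$, and the conclusion of the theorem is equivalent to $e^{-x} j(x) \ssim p(x)$ as $x \to \infty$.

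The main tool is a Fourier smoothing argument. I would fix a nonnegative, even, integrable function $V$ on $\bbR$ whose Fourier transform $\widehat V$ is continuous, nonnegative, and compactly supported, with $\widehat V(0)>0$ (for instance, the Fej\'er kernel). For each $\lambda>0$, set $V_\lambda(t) = V(t/\lambda)/\lambda$. Parseval's identity, combined with the continuous extension of $\widetilde K$ to the imaginary axis, expresses the convolution
\[
\big((e^{-\cdot} j - p)*V_\lambda\big)(x)
\eq \frac1{2\pi}\int_{-\infty}^\infty \widetilde K(it)\+\widehat{V_\lambda}(t)\+e^{ixt}\+dt .
\]
Because $t\mapsto \widetilde K(it)$ is continuous and $\widehat{V_\lambda}$ is compactly supported, the integrand is continuous and compactly supported in $t$, so by the Riemann--Lebesgue lemma the right-hand side tends to $0$ as $x\to\infty$, for each fixed $\lambda$.

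The final step converts this averaged convergence into the desired pointwise asymptotic using the monotonicity of $j$. For any $\delta>0$, the inequalities $j(x-\delta) \leqs j(x) \leqs j(x+\delta)$ give a two-sided sandwich which, after multiplying by $e^{-x}$ and comparing against $p(x)$, yield upper and lower bounds on $\limsup_{x\to\infty} e^{-x}j(x)/p(x)$ and $\liminf_{x\to\infty} e^{-x}j(x)/p(x)$ in terms of the smoothed quantities computed above. Sending $\delta\to 0$ and $\lambda\to\infty$ in the correct order forces both bounds to $1$.

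The hardest part will be handling $\alpha>1$, where the reference $p(x)$ is itself polynomially growing. In the classical case $\alpha=1$, $p$ is constant and the monotonicity sandwich is particularly clean. For $\alpha>1$ one must quantify the slow variation $p(x\pm\delta)/p(x)\to 1$ uniformly on compact windows and incorporate a weighted version of the smoothing kernel so that the polynomial factor appearing on each side of the sandwich still matches $p$ to leading order. Conceptually, the continuity of $K$ on $\chplane 1$ plays the role that non-vanishing of $\zeta(1+it)$ plays in the proof of the prime number theorem: it is precisely the analytic input that rules out oscillatory behaviour of $j$ on the exponential scale.
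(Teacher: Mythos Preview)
The paper does not prove this proposition: it is quoted verbatim as Kable's Theorem~1 and used as a black box in the proof of Proposition~\ref{propDirichletAsymptotics}. So there is no in-paper argument to compare your proposal against.

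That said, your outline is the standard Wiener--Ikehara route and is the right shape for a proof. Two places deserve more care before it could stand as a proof. First, the Parseval identity you invoke is not immediate: $e^{-x}j(x)-p(x)$ is not known a~priori to lie in $L^1$, so one must first write the convolution with the regularised transform $\widetilde K(\sigma+it)$ for $\sigma>0$ and then pass to the limit $\sigma\to0^+$, using the compact support of $\widehat{V_\lambda}$ together with the hypothesised continuous extension of $\widetilde K$ to justify the interchange. Second, the de-smoothing step for $\alpha>1$ is, as you acknowledge, the crux; your description (``incorporate a weighted version of the smoothing kernel'') gestures at the issue but does not yet supply the missing idea. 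In Kable's treatment and similar extensions, this is typically handled either by an auxiliary integration that reduces to the $\alpha=1$ case, or by working with a kernel whose moments up to order $\lceil\alpha\rceil$ are controlled so that the sandwich $j(x-\delta)\leqs j(y)\leqs j(x+\delta)$ combines correctly with the polynomial weight $y^{\alpha-1}$ over the support of $V_\lambda$. Without one of these devices the monotonicity argument alone does not close.
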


The following proposition establishes the asymptotics of the summatory function of suitably well-behaved Dirichlet series.
Subsequently, we transfer this result to Ribenboim series.

\begin{propO}\label{propDirichletAsymptotics}
Suppose $F(s)=\sum_{\lambda\in\Lambda} a_\lambda e^{-\lambda s}$ is a Dirichlet series with nonnegative coefficients and abscissa of convergence $\mu>0$.
Suppose that $\mu$ is the unique singularity of $F$ on the line $\mu+i\bbR$, and that one can write
\[
    F(s) \eq G(s)\splus\frac{H(s)}{(s-\mu)^\alpha} ,
\]
where $\alpha\notin-\bbN$, and both $G$ and $H$ are analytic on the half-plane $\chplane\mu$, then the asymptotics of the summatory function of $F$ are given by
\[
    \sum_{\lambda\leqs x} a_\lambda
    \ssim \frac{H(\mu)}{\mu\,\Gamma(\alpha)} \+ e^{\mu x} \+ x^{\alpha-1}.
\]
\end{propO}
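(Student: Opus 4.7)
The plan is to combine Observation~\ref{obsLaplaceTranform} with Proposition~\ref{propTauberian}, after rescaling so that the abscissa of convergence becomes~$1$. Let $A(x)=\sum_{\lambda\leqs x}a_\lambda$ be the summatory function of $F$, and put $j(x)=A(x/\mu)$, which is nonnegative and non-decreasing. By Observation~\ref{obsLaplaceTranform} and the change of variable $u=x/\mu$, the Laplace transform of $j$ is
\[
J(s)\eq\int_0^\infty j(x)\+e^{-sx}\+dx\eq\frac{F(\mu s)}{s},
\]
which converges for $\Re s>1$.

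Using the hypothesised decomposition of $F$, I would write
\[
J(s)\eq\frac{G(\mu s)}{s}\splus\frac{H(\mu s)}{\mu^\alpha\+s\+(s-1)^\alpha},
\]
set $L=H(\mu)/\mu^\alpha$, and verify that $K(s)=J(s)-L/(s-1)^\alpha$ extends continuously to the closed half-plane $\chplane{1}$. The term $G(\mu s)/s$ is analytic on $\chplane{1}$, since $G$ is analytic on $\chplane{\mu}$ and $s$ does not vanish there (because $\mu>0$). The assumption that $\mu$ is the unique singularity of $F$ on its line of convergence is precisely what guarantees that the remaining summand extends continuously to $\chplane{1}\setminus\{1\}$. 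At $s=1$, the contribution that needs checking is
\[
\frac{H(\mu s)-s\+H(\mu)}{\mu^\alpha\+s\+(s-1)^\alpha};
\]
the numerator is analytic on $\chplane{1}$ and vanishes at $s=1$, so factoring out $s-1$ shows the quotient is $O\big(|s-1|^{1-\alpha}\big)$, continuous at $s=1$ whenever $\alpha\leqs1$.

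For $\alpha>1$, I would peel off successive Taylor terms of $H(\mu s)/(\mu^\alpha s)$ around $s=1$, each contributing a strictly subdominant singularity $c_k/(s-1)^{\alpha-k}$ whose Tauberian image is of order $e^{x}\+x^{\alpha-k-1}$ and therefore negligible beside the leading $x^{\alpha-1}$ term. Applying Proposition~\ref{propTauberian} then yields
\[
j(x)\ssim\frac{H(\mu)}{\mu^\alpha\+\Gamma(\alpha)}\+e^{x}\+x^{\alpha-1},
\]
and substituting $x\mapsto\mu x$ to undo the rescaling delivers the claimed asymptotic
\[
A(x)\ssim\frac{H(\mu)}{\mu\+\Gamma(\alpha)}\+e^{\mu x}\+x^{\alpha-1}.
\]

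The main obstacle is the continuity verification for~$K$; it is immediate when $\alpha\in(0,1]$ but, for $\alpha>1$, requires the iterative Taylor-subtraction just described, together with a careful accounting of the subordinate singular pieces to confirm that they indeed produce only lower-order asymptotic contributions. The remaining cases (values $\alpha\leqs0$ with $\alpha\notin-\bbN$) are essentially vacuous, since the singular piece can then be absorbed into~$G$.
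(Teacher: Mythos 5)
Your overall strategy (rescale by $\mu$, use Observation~\ref{obsLaplaceTranform} to realise $F(\mu s)/s$ as the Laplace transform of $A(x/\mu)$, and invoke Proposition~\ref{propTauberian}) is exactly the paper's, and your bookkeeping of the constant $L=H(\mu)/\mu^\alpha$ is correct. However, your case analysis is organised around the wrong constraint. Proposition~\ref{propTauberian} carries the hypothesis $\alpha\geqs1$ and says nothing when $\alpha<1$; so the case you call ``immediate'', namely $\alpha\in(0,1]$, is in fact only covered at $\alpha=1$, and your closing remark --- that $\alpha\leqs0$ is ``essentially vacuous'' because the singular piece can be absorbed into $G$ --- is wrong on two counts. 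For non-integer $\alpha<0$ the function $H(s)(s-\mu)^{-\alpha}$ has a branch point at $s=\mu$ and is not analytic there, so it cannot be absorbed into the analytic $G$; and the conclusion for $\alpha<0$ is a genuine, non-vacuous asymptotic: it is precisely the case $\alpha=-\tfrac12$, producing the $x^{-3/2}$ factor, on which all of the paper's tree and lattice-path applications depend.

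The missing idea is the paper's treatment of $\alpha<1$: differentiate $\widetilde{F}(s)=F(\mu s)/s$ under the integral sign $n=\ceil{1-\alpha}$ times, so that $(-1)^n\widetilde{F}^{(n)}(s)$ is the Laplace transform of $x^n A(x/\mu)$, which is still nonnegative and non-decreasing and whose singular exponent $\alpha+n$ is now at least~$1$. Kable's theorem applies to that function, and the constant is recovered from $Z(1)=\widetilde{H}(1)\+\alpha(\alpha+1)\cdots(\alpha+n-1)=L\,\Gamma(\alpha+n)/\big(\mu^\alpha\Gamma(\alpha)\big)$, the hypothesis $\alpha\notin-\bbN$ guaranteeing this is nonzero; dividing by $x^n$ gives the claim. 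Separately, your Taylor-peeling for $\alpha>1$ is not justified by Proposition~\ref{propTauberian} as stated: the theorem takes a single non-decreasing function as input, so you cannot subtract the subdominant terms $c_k/(s-1)^{\alpha-k}$ one at a time and sum their ``Tauberian images''; the paper simply applies the theorem directly to $A(x/\mu)$ throughout the regime $\alpha\geqs1$.
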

\newcommand{\til}{\widetilde}
\begin{proof}
  Let $A(x)=\sum_{\lambda\leqs x} a_\lambda$ be the summatory function of~$F$, and let $L=H(\mu)$.

  We re-scale so that the abscissa of convergence is at 1.
  By Observation~\ref{obsLaplaceTranform}, when $\re s>1$, we have
  \begin{equation}\label{eqFTildeLaplace}
  \til{F}(s) \defeq
  \frac{F(\mu s)}s
    \eq \int_0^\infty A\!\left(\frac{x}{\mu}\right) e^{-sx} \+dx ,
  \end{equation}
  where $A(x/\mu)$ is nonnegative and non-decreasing.

  Moreover,
  \begin{equation}\label{eqFGHTilde}
    \til{F}(s) \eq \til{G}(s)\splus\frac{\til{H}(s)}{(s-1)^\alpha} ,
  \end{equation}
  for functions $\til{G}$ and $\til{H}$ analytic on the half-plane $\chplane1$.

  We consider two cases. Assume first that $\alpha\geqs1$.
  Then
  $A(x/\mu)$ satisfies the conditions of the Tauberian theorem (Proposition~\ref{propTauberian}).
  Therefore,
  \[
  A(x/\mu) \ssim \frac{\til{H}(1)}{\Gamma(\alpha)}\+e^x\+ x^{\alpha-1} .
  \]
  Thus, since $\til{H}(1)=L/\mu^\alpha$, we have
  \[
  A(x) \ssim \frac{L}{\mu\,\Gamma(\alpha)} \+ e^{\mu x} \+ x^{\alpha-1} ,
  \]
  as required.

  Now suppose that $\alpha<1$.
  Since $\til{F}(s)$ is analytic, so is the Laplace transform on the right of equation~\eqref{eqFTildeLaplace},
  and we can legitimately differentiate under the integral sign.
  For each $n\in\bbN$, we have
  \[
    (-1)^n\,\frac{\mathrm{d}^n}{\mathrm{d}s^n}\til{F}(s)
    \eq
    \int_0^\infty x^n\+A\!\left(\frac{x}{\mu}\right) e^{-sx} \+dx .
  \]
  Taking the $n$th derivative in~\eqref{eqFGHTilde}, we can also write
  \[
    (-1)^n\,\frac{\mathrm{d}^n}{\mathrm{d}s^n}\til{F}(s)
    \eq
    Y(s) \splus \frac{Z(s)}{(s-1)^{\alpha+n}}
  \]
  where $Y$ and $Z$ are both analytic on the half-plane $\chplane1$.
  Moreover, if $1$ is the only singularity of $\til{F}$ on the line $1+i\mathbb{R}$, then this is also the case for its derivatives.

  If we let $n=\ceil{1-\alpha}$ so that $\alpha+n\geqs1$, then the Tauberian theorem now applies to $x^nA(x/\mu)$, so we have
  \[
  x^n A(x/\mu) \ssim \frac{Z(1)}{\Gamma(\alpha+n)}\+e^x\+ x^{\alpha+n-1} .
  \]
  Now, since $\alpha\notin-\bbN$,
  \[
  Z(1) \eq \til{H}(1)\+\alpha\+(\alpha+1)\+\cdots\+(\alpha+n-1) \eq \frac{L\,\Gamma(\alpha+n)}{\mu^\alpha\+\Gamma(\alpha)} .
  \]
  Hence,
  \[
  A(x) \ssim \frac{L}{\mu\,\Gamma(\alpha)} \+ e^{\mu x} \+ x^{\alpha-1}
  \]
  when $\alpha<1$ too.
\end{proof}

This is sufficient for the proof of our main theorem,
which we restate first.

\textbf{Theorem \ref{thmMain}.} \emph{\mainTheoremText}

\begin{proof}
  If the conditions of the theorem are met, then $F_\CCC(s)$, the \dgf{} of $\CCC$, is of the form
  \[
  F_\CCC(s) \eq G(s)\splus\frac{H(s)}{(s-\mu)^\alpha},
  \]
  where $\mu=\log(1/\rho)$ is the unique singularity of $F_\CCC$ on the line $\mu+i\bbR$,
  and both $G$ and $H$ are analytic on the half-plane $\chplane\mu$.
  Specifically, we have 
  \[
  G(s) \eq g(e^{-s}) ,
  \qquad
  H(s) \eq h(e^{-s})\left(\frac{s-\mu}{1-e^{\mu-s}}\right)^{\!\alpha}
  .
  \]
  Hence, by Proposition~\ref{propDirichletAsymptotics},
  \[
  |\CCC_{\leqs x}|
  \ssim \frac{H(\mu)}{\log(1/\rho)\,\Gamma(\alpha)} \+ \rho^{- x} \+ x^{\alpha-1}.
  \]
  It only remains to check that $H(\mu)=h(\rho)$.
  Now, 
  \[
  \lim_{s\to\mu} \,\frac{s-\mu}{1-e^{\mu-s}} \eq 1 .
  \]
  Thus $H(\mu)=h(e^{-\mu})=h(\rho)$ as required.
\end{proof}

\bibliographystyle{plain}
{\footnotesize\bibliography{../bib/mybib}}

\end{document}